\documentclass[letterpaper,11pt,reqno]{amsart}

\usepackage{mathtools}
\usepackage{amssymb}
\usepackage{mathrsfs}
\usepackage[dvipsnames]{xcolor}
\usepackage[
hypertexnames=false,
colorlinks=false,
linkcolor=NavyBlue,
filecolor=black,
urlcolor=blue,
citecolor=OrangeRed
]{hyperref}
\usepackage{aliascnt}
\usepackage[normalem]{ulem}

\numberwithin{equation}{section}

\newtheorem{theorem}{Theorem}[section]

\newaliascnt{proposition}{theorem}
\newtheorem{proposition}[proposition]{Proposition}
\aliascntresetthe{proposition}

\newaliascnt{lemma}{theorem}
\newtheorem{lemma}[lemma]{Lemma}
\aliascntresetthe{lemma}

\newaliascnt{corollary}{theorem}

\aliascntresetthe{corollary}

\theoremstyle{definition}
\newaliascnt{definition}{theorem}
\newtheorem{definition}[definition]{Definition}
\aliascntresetthe{definition}

\theoremstyle{remark}
\newaliascnt{remark}{theorem}
\newtheorem{remark}[remark]{Remark}
\aliascntresetthe{remark}

\usepackage[capitalize]{cleveref}
\usepackage{tikz}
\usetikzlibrary{arrows.meta, calc}
\crefname{theorem}{Theorem}{Theorems}
\Crefname{theorem}{Theorem}{Theorems}
\crefname{proposition}{Proposition}{Propositions}
\Crefname{proposition}{Proposition}{Propositions}
\crefname{lemma}{Lemma}{Lemmas}
\Crefname{lemma}{Lemma}{Lemmas}
\crefname{corollary}{Corollary}{Corollaries}
\Crefname{corollary}{Corollary}{Corollaries}
\crefname{definition}{Definition}{Definitions}
\Crefname{definition}{Definition}{Definitions}
\crefname{remark}{Remark}{Remarks}
\Crefname{remark}{Remark}{Remarks}

\newcommand{\R}{\mathbb R}
\newcommand{\Z}{\mathbb Z}

\newcommand{\op}{\operatorname}
\newcommand{\Ad}{\op{Ad}}
\newcommand{\diag}{\op{diag}}
\newcommand{\Fix}{\op{Fix}}
\newcommand{\vis}{\op{vis}}

\newcommand{\fa}{\mathfrak a}

\renewcommand{\L}{\mathcal L}
\newcommand{\La}{\Lambda}
\newcommand{\Ga}{\Gamma}

\newcommand{\LieA}{\mathfrak a}

\newcommand{\aL}{\mathfrak a}
\newcommand{\Fboundary}{\mathcal F}
\newcommand{\limitset}{\Lambda_\Gamma}
\newcommand{\limitcone}{\mathcal L_\Gamma}

\newcommand{\ret}{\mathsf K}
\newcommand{\gen}{\mathcal A}
\newcommand{\Sball}{\mathbf b}
\newcommand{\SBall}{\mathbf B}

\newcommand{\cA}{\mathcal A}
\newcommand{\g}{\mathfrak g}
\newcommand{\cnL}{\check{\mathfrak n}}
\newcommand{\mL}{\mathfrak m}
\newcommand{\nL}{\mathfrak n}
\newcommand{\F}{\mathcal F}

\renewcommand{\Ad}{\operatorname{Ad}}

\renewcommand{\vis}{\operatorname{vis}}

\title[Non-arithmeticity of Jordan spectra]{A dynamical proof of non-arithmeticity \\of Jordan spectra}

\author{Hee Oh}
\address{Department of Mathematics, Yale University, New Haven, CT}
\email{hee.oh@yale.edu}
\thanks{Oh is partially supported by the NSF grant No. DMS-2450703.}

\author{Pratyush Sarkar}
\address{Simons Laufer Mathematical Sciences Institute (SLMath), 17 Gauss Way, Berkeley, CA 94720}
\curraddr{Department of Mathematics, University of Utah, 155 South 1400 East, Salt Lake City, UT 84112}
\email{p.sarkar@utah.edu}

\begin{document}

	\begin{abstract}
		We give a dynamical proof of Benoist's non-arithmeticity theorem for
		Jordan spectra of Zariski dense subgroups of connected semisimple real
		algebraic groups. After passing to a Zariski dense Schottky subgroup, we use the coding of the limit set to realize Jordan
		projections as periods of a vector-valued Busemann return map for an expanding map on the Furstenberg boundary. The key step is to prove that a suitable two-branch
		asymptotic discrepancy is not locally constant on the limit set.  We also
		show that the same criterion applies beyond Lie groups; in particular, it
		yields a direct density result for multiplier spectra of hyperbolic rational maps
		whose Julia set is not contained in a circle.
	\end{abstract}

	\maketitle
	
	\section{Introduction}
	In rank one, the length spectrum of a locally symmetric space consists of
	the lengths of its closed geodesics, or equivalently, the translation lengths
	of hyperbolic elements of the corresponding discrete group of isometries. The
	non-arithmeticity of the length spectrum asserts that the subgroup of
	\(\mathbb R\) generated by these lengths is dense; this was proved by Dal'bo \cite{Dalbo} for surfaces and by I. Kim \cite{Kim_inkang} in general.  This property is a
	fundamental input in many problems in dynamics and rigidity.

	The Jordan spectrum is the higher-rank analogue of the length spectrum. The corresponding higher-rank non-arithmeticity theorem was proved by Benoist (\cite{Benoist97}, \cite{Benoist_00}). It has since become an important structural result in higher-rank dynamics, especially in questions involving counting, transfer operators, horospherical invariant measure classification, rigidity and local mixing. The goal of this paper is to give an alternative proof of Benoist's theorem. Our proof is more dynamical in nature than Benoist's original proof and also differs from Quint's later proof \cite{Quint_Schottky}. 
	
	Let $G$ be a connected semisimple real algebraic group. Let $\LieA$ be the Lie algebra of a real split torus $A < G$ and fix a positive Weyl chamber $\LieA^+$.
	We denote by
	\[
	\lambda:G\to\LieA^+
	\]
	the Jordan projection. Thus, if $g=g_e g_h g_u$
	is the Jordan decomposition of $g$ as a commuting product of an elliptic element $g_e$, a hyperbolic element $g_h$, and a unipotent element $g_u$, then $\lambda(g)$ is the unique element of $\LieA^+$ such that $g_h$ is conjugate to $\exp(\lambda(g))$.
	For example, when \(G=\mathrm{SL}_n(\mathbb R)\), if $|\alpha_1|\geq \cdots \geq |\alpha_n|$
	are the moduli of the eigenvalues of \(g\), counted with multiplicity, then
	\[
	\lambda(g)=\diag(\log|\alpha_1|,\ldots,\log|\alpha_n|).
	\]

	Let $\Gamma<G$ be a Zariski dense  subgroup. The set
	\[
	\lambda(\Gamma)\subset\LieA^+
	\]
	is called the Jordan spectrum of $\Gamma$. It records the spectral data of elements of $\Gamma$.
	The smallest closed cone in $\LieA^+$ containing $\lambda(\Gamma)$ is called the limit cone of $\Ga$ and is denoted by $\limitcone$.
	
	The main contribution of this paper is a new proof of the non-arithmeticity
	of the Jordan spectrum. Together with Benoist's convexity theorem for the
	limit cone, this also recovers the nonempty interior of the limit cone.
	\begin{theorem}[Benoist, \cite{Benoist97}, \cite{Benoist_00}]
		\label{thm:Benoist-main}\label{m1}
		Let $G$ be a connected semisimple real algebraic group and let $\Gamma<G$ be Zariski dense. Then:
		\begin{enumerate}
			\item the Jordan spectrum $\lambda(\Gamma)\subset\LieA$ is non-arithmetic, i.e., it generates a dense subgroup of $\LieA$;
			\item the limit cone $\limitcone$ has nonempty interior.
		\end{enumerate}
	\end{theorem}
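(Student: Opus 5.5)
We plan to follow the strategy announced in the abstract. First, we reduce to the case where $\Gamma$ is a Zariski dense Schottky subgroup. By Benoist's lemma on the existence of Zariski dense Schottky subsemigroups/subgroups, every Zariski dense $\Gamma$ contains a Zariski dense, free, loxodromic Schottky subgroup $\Gamma_0=\langle g_1,\dots,g_k\rangle$ whose generators act on the Furstenberg boundary $\F=G/P$ with disjoint attracting and repelling neighborhoods. Since $\lambda(\Gamma_0)\subset\lambda(\Gamma)$ and the cone over $\lambda(\Gamma_0)$ lies in $\limitcone$, it suffices to prove (1) and (2) for $\Gamma_0$. Part (2) will then follow from (1) together with Benoist's convexity of the limit cone. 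Indeed, if $\limitcone$ had empty interior, its convexity would put it inside a proper subspace $W\subsetneq\LieA$. Then $\lambda(\Gamma)\subset W$, so the group it generates is not dense, contradicting (1).

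For (1), we code the limit set $\Lambda\subset\F$ by reduced words. On the disjoint domains $D_i$ we define the expanding map $T|_{D_i}=g_i^{-1}$. Let $\beta$ denote the $\LieA$-valued Busemann cocycle on $G\times\F$, and define the return map $\tau(\xi)=\beta(g_i^{-1},\xi)$ for $\xi\in D_i$, which is Hölder in $\xi$. The cocycle identity shows that for a periodic point $\xi$ of period $n$, the Birkhoff sum $S_n\tau(\xi)$ equals $\lambda(\gamma)$ for the corresponding cyclically reduced word $\gamma$; this uses that $\xi$ is the attracting fixed point of $\gamma$. Thus the Jordan spectrum contains the periods of $\tau$. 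Suppose now that the group $H$ generated by $\lambda(\Gamma_0)$ is not dense. Then there is a nonzero linear functional $\phi$ with $\phi(\lambda(\Gamma_0))\subset c\Z$, possibly with $c=0$. By the Livsic theorem for the subshift of finite type, $\phi\circ\tau$ is cohomologous to a function taking values in $c\Z$: $\phi\circ\tau=u\circ T-u+\psi$ with $u$ Hölder and $\psi$ valued in $c\Z$. Here $\psi$ is locally constant on $\Lambda$.

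We derive a contradiction through a two-branch asymptotic discrepancy. Fix two periodic words $a,b$ with attracting fixed points $\xi_a,\xi_b$. For $x\in\Lambda$ that can be preceded by both branches, consider
\[\Delta(x)=\lim_{n\to\infty}\Bigl[\beta(a^n,x)-\beta(a^n,\xi_a)\Bigr]-\Bigl[\beta(b^n,x)-\beta(b^n,\xi_b)\Bigr],\]
which is a combination of Busemann/Gromov-product-type terms: $\Delta(x)=\mathcal G(\xi_a',x)-\mathcal G(\xi_b',x)+\text{const}$, with $\xi_a',\xi_b'$ the repelling points. The cohomological equation forces $\phi\circ\Delta$ to be locally constant on $\Lambda$, since the $u$-terms telescope and $\psi$ contributes values in the discrete set $c\Z$ that vary continuously. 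The key step, and the main obstacle, is showing that $\phi\circ\Delta$ is \emph{not} locally constant on $\Lambda$. We plan to write $\mathcal G(\eta,x)$ via the explicit formula for the Iwasawa/Busemann cocycle in terms of highest weight representations: $\langle\chi_\alpha,\mathcal G(\eta,x)\rangle=\log|\langle v_\eta,v_x\rangle|$ up to normalization, for fundamental weights $\chi_\alpha$. Then $\phi\circ\Delta$ is a real-analytic function of $x$ on an open subset of $\F$. If it were constant on a neighborhood of a limit point, Zariski density of $\Lambda$ (which holds since $\Gamma_0$ is Zariski dense) would make it constant on a Zariski open set. Varying the pair $(\xi_a',\xi_b')$, which ranges over a Zariski dense set of pairs, yields an algebraic identity among the matrix coefficients $|\langle v_\eta,v_x\rangle|$. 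Such an identity is impossible, because the zero loci $\{x:\langle v_\eta,v_x\rangle=0\}$ are distinct divisors for distinct $\eta$ in general position. The delicate part will be ensuring that the chosen $\phi$ pairs nontrivially with some fundamental weight direction, so that the singular divisors genuinely appear; for this we expect to use irreducibility of the representations together with the Zariski density of $\Lambda$ in $\F$.
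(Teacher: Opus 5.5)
Your architecture is the paper's: reduction to a Zariski dense Schottky subgroup; a Busemann return map whose periods are the Jordan projections (with $\beta$ the Iwasawa cocycle your $S_n\tau$ is $-\lambda(\gamma)$, which is harmless); the closed-subgroup argument producing $\phi$ with $\phi(\lambda(\Gamma_0))\subset c\mathbb Z$; Liv\v{s}ic; local constancy on $\Lambda$ of a scalar two-branch discrepancy; and (2) from (1) plus convexity. Your $\Delta$ is the paper's $\Phi_{\zeta,\eta}(\cdot,x')$ for periodic backward branches, up to an additive constant.

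You diverge only in the key nonconstancy step, and there the proposal has a genuine gap. You go from ``$\phi\circ\Delta$ is constant on $\Lambda\cap U$'' to ``constant on a Zariski open set'' using only Zariski density of $\Lambda$. This does not follow. Write $\phi=\sum_\alpha c_\alpha\chi_\alpha$ with arbitrary real $c_\alpha$. Then $\phi\circ\Delta=\sum_\alpha c_\alpha\log|f_\alpha/g_\alpha|+\mathrm{const}$ is real-analytic but not algebraic, and its level sets are analytic rather than algebraic hypersurfaces. A Zariski dense set can lie in an analytic hypersurface (e.g.\ the graph of $t\mapsto e^t$ in $\R^2$). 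For the same reason, varying the pair produces no algebraic identity among matrix coefficients.

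What is missing is an analytic nondegeneracy of the limit set, namely \cref{elo}: $\Lambda\cap\mathcal O$ is not contained in any lower-dimensional smooth submanifold. This is a dynamical input not implied by Zariski density alone, and the paper uses it exactly at this point. With it you can repair the step. Suppose $F=\phi\circ\Delta-c$ vanishes on $\Lambda\cap U$, with $U$ connected, but $F\not\equiv0$ on $U$. By analyticity some partial derivative of $F$ is nonzero at a point of $\Lambda\cap U$. Take the minimal such order $k\ge1$. Then some derivative $G$ of order $k-1$ vanishes on $\Lambda\cap U$ but has $dG\ne0$ at a point of $\Lambda\cap U$. So $\Lambda$ lies locally in the smooth hypersurface $\{G=0\}$, a contradiction.

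The final step also needs more care than you indicate. The point you flag as delicate is automatic, since the fundamental weights form a basis of $\mathfrak a^*$. The real difficulty is real-algebraic. For non-split $G$ the real loci $Z_{\alpha,\eta}=\{x:\langle v^\alpha_\eta,v^\alpha_x\rangle=0\}$ can have codimension $>1$; for $\mathrm{SO}(n,1)$ one gets $Z_{\alpha,\eta}=\{\eta\}$. So ``distinct divisors'' is not the mechanism. Argue instead with a single pair $\eta_1\ne\eta_2$ in general position:
\begin{enumerate}
\item Continue $F$ analytically to the component $C$ containing $U$ of the complement of the loci $Z_{\alpha,\eta_i}$ with $c_\alpha\ne0$.
\item Show that $\partial C$ contains a point where exactly one such matrix coefficient vanishes. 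This uses $\eta_i\in Z_{\alpha,\eta_i}\setminus Z_{\beta,\eta_j}$ for $i\ne j$, the Bruhat order, and irreducibility of real Schubert varieties.
\item Near that point $F=c_\alpha\log|f_\alpha|+O(1)$ is unbounded, contradicting constancy on $C$.
\end{enumerate}

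Completed this way, your route gives more than \cref{lem:nonzero-derivative}: nonconstancy for every pair of distinct branches, not just for one branch chosen off the hypersurface $L^{-1}(0)$. The cost is representation theory and real Schubert geometry. The paper bypasses both with the first-order identity $(D\mathcal H_n)_e=\pi_{\mathfrak a}\circ\Ad_n|_{\check{\mathfrak n}}\circ(D\Psi_n)_e$ and the bracket relation $[E_\alpha,E_{-\alpha}]\in\R_{>0}H_\alpha$.
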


	Theorem \ref{m1} is a consequence of the following one-dimensional scalar strengthening of the non-arithmeticity statement.
	
	\begin{theorem}\label{m2}
		\label{thm:projected-Jordan-spectrum-non-arithmetic}
		Let $G$ be a connected semisimple real algebraic group and let $\Gamma<G$ be Zariski dense. Then, for every $\psi\in\LieA^*-\{0\}$, the subset
		\[
		\psi(\lambda(\Gamma))\subset\R
		\]
		generates a dense subgroup of $\R$.
	\end{theorem}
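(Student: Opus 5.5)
We argue by contradiction, following the route announced in the abstract. Suppose that for some $\psi\in\LieA^*-\{0\}$ the group generated by $\psi(\lambda(\Gamma))$ is not dense. Then it is contained in $c\Z$ for some $c\ge 0$.

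\textbf{Step 1 (Schottky reduction).} Replace $\Gamma$ by a Zariski dense Schottky subgroup $\Gamma_0<\Gamma$, freely generated by loxodromic elements $\gamma_1,\dots,\gamma_k$ in general position. Its limit set $\Lambda\subset\F$ is coded by a subshift of finite type, and it suffices to prove the statement for $\Gamma_0$. On a neighborhood of $\Lambda$ in the Furstenberg boundary we define an expanding map $T$ which acts by $\gamma_i^{-1}$ on the piece corresponding to the letter $\gamma_i^{\pm1}$.

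\textbf{Step 2 (Jordan projections as periods).} Using the Iwasawa cocycle $\sigma:G\times\F\to\LieA$, set
\[
\tau(\xi)=\sigma(\gamma_i^{-1},\xi)
\]
on the $i$-th piece. For a periodic point $\xi$ of period $n$, with $g=\gamma_{i_1}\cdots\gamma_{i_n}$ cyclically reduced, the Birkhoff sum $S_n\tau(\xi)$ equals $\lambda(g)$, since $\xi$ is the attracting fixed point of $g$ and the cocycle at that fixed point computes the Jordan projection. Hence every periodic sum of the scalar cocycle $\psi\circ\tau$ lies in $c\Z$.

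Liv\v{s}ic's theorem, applied to the H\"older function $\psi\circ\tau$ on the symbolic space, then produces a H\"older function $u$ on $\Lambda$ such that
\[
\psi\circ\tau - u\circ T + u \in c\Z .
\]
Because this expression takes values in $c\Z$ and is continuous, it is locally constant.

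\textbf{Step 3 (two-branch discrepancy).} Take two inverse branches, say those along the words $\gamma_1^N$ and $\gamma_2^N$, and compare them. Define the asymptotic discrepancy
\[
\Delta(\xi)=\lim_{N\to\infty}\Big[\psi\big(\sigma(\gamma_1^{N}\gamma_2^{N},\xi)\big)-\psi\big(\sigma(\gamma_2^{N}\gamma_1^{N},\xi)\big)-(\text{the } \xi\text{-independent normalizing terms})\Big].
\]
Telescoping with $u$ shows that $\Delta$ equals a difference of values of $u$ at fixed attracting points plus an element of $c\Z$. Therefore $\Delta$ is locally constant on $\Lambda$.

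Computing the limit explicitly, $\Delta(\xi)$ is $\psi$ applied to a difference of Busemann-type cocycles $\beta_{y_1}(\xi,\cdot)-\beta_{y_2}(\xi,\cdot)$ based at the attracting fixed points $y_1,y_2$. It can be written through the Gromov-type products on $\F\times\F$, for instance as a combination of
\[
\psi\big(\mathcal G(\xi,y_1^-)\big)-\psi\big(\mathcal G(\xi,y_2^-)\big),
\]
where $y_i^-$ are the repelling fixed points. These functions are real-analytic in $\xi$ on the open set of points in general position. Each is given, via highest-weight representations $\rho_\alpha$, by $\log|\langle v_\xi,w\rangle|$ for suitable vectors.

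\textbf{Step 4 (main obstacle: nonconstancy).} The hardest step is to show that $\Delta$ is not locally constant on $\Lambda$. Suppose it were constant on an open subset of $\Lambda$. Since $\Lambda$ is Zariski dense in $\F$ (as $\Gamma_0$ is Zariski dense) and $\Delta$ is analytic, indeed algebraic after exponentiation, $\Delta$ would then be constant on a Zariski open subset of $\F$.

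Writing $\psi=\sum_\alpha c_\alpha\,\omega_\alpha$ in fundamental weights, this would give a multiplicative identity among the polynomial functions $\xi\mapsto|\langle v_\xi,w_{i,\alpha}\rangle|$. Such an identity forces the hyperplanes determined by $y_1^-$ and $y_2^-$ to coincide, since the zero loci of these polynomials are the Schubert-type divisors $\{\xi:\xi\text{ not transverse to }y_i^-\}$. This contradicts the general-position choice of $y_1^-\neq y_2^-$. If $\psi$ has several nonzero coefficients we treat the components separately, using that distinct fundamental weights give distinct irreducible divisors.

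Hence $\Delta$ is not locally constant, which contradicts Step 3. So $\psi(\lambda(\Gamma))$ generates a dense subgroup of $\R$.
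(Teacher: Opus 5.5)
Steps 1--3 follow the paper's mechanism and are fine. For $\xi\in\Lambda$ not beginning with $\gamma_1^{-1}$ or $\gamma_2^{-1}$, your telescoping gives $\psi(\sigma(\gamma_1^N\gamma_2^N,\xi))-\psi(\sigma(\gamma_2^N\gamma_1^N,\xi))\equiv u(\gamma_1^N\gamma_2^N\xi)-u(\gamma_2^N\gamma_1^N\xi)\pmod{c\Z}$, so $\Delta$ is locally constant on $\Lambda$ there. (Your Birkhoff sums actually equal $-\lambda(g)$, which is harmless.)

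The gap is the first inference of Step 4. Write $\Delta=\sum_\alpha c_\alpha\log|R_\alpha|+\mathrm{const}$, where $R_\alpha$ is the ratio of the two linear forms in $\rho_\alpha$. Then $e^{\Delta}$ is, up to a constant, $\prod_\alpha|R_\alpha|^{c_\alpha}$, which is algebraic only when the $c_\alpha$ are commensurable. For a general $\psi\in\fa^*$ it is transcendental, and Zariski density says nothing about its level sets. A Zariski dense set can lie in a transcendental analytic hypersurface: for example, $\{t_1t_2^{\sqrt2}=1\}\subset\R^2_{>0}$ is Zariski dense in $\R^2$. So ``constant on an open piece of $\Lambda$ $\Rightarrow$ constant on a Zariski open subset of $\F$'' is unjustified, and your divisor argument never gets started. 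This is not a marginal case: the theorem is asserted for every $\psi$, and incommensurable $\psi$ are exactly what is needed to deduce Theorem~\ref{m1}(1). For commensurable $c_\alpha$ your argument does go through, once you note that relatively open pieces of $\Lambda$ are themselves Zariski dense (e.g.\ they contain $\gamma\Lambda$ for some $\gamma\in\Gamma_0$).

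What is missing is a nondegeneracy property of $\Lambda$ in the smooth category. This is Lemma~\ref{elo}: no relatively open piece of $\Lambda$ lies in a lower-dimensional smooth submanifold, and it is proved using the expanding dynamics rather than Zariski density. You also need a point of $\Lambda$ at which the discrepancy has nonzero differential, so that its level set there is a smooth hypersurface.

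The paper produces such a point directly. It identifies the two-branch discrepancy with a rectangular holonomy and computes $(D\mathcal H_n)_e=\pi_{\fa}\circ\Ad_n\circ(D\Psi_n)_e$. It then uses $\psi([E_\alpha,E_{-\alpha}])\ne0$, together with Lemma~\ref{elo} on the $N$-side, to choose the backward point $n^-\in\Lambda$ so that the derivative in the forward variable at $x_\omega$ is nonzero. Lemma~\ref{elo} is applied a second time on the forward side to get the contradiction.

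You might instead salvage your route as follows. Use the blow-up along the Schubert divisors to show that $\Delta$ is nonconstant on the relevant connected component of its domain. Then combine Lemma~\ref{elo} with a stratification of the proper analytic set $\{\Delta=\mathrm{const}\}$. Either way, some input of this smooth, dynamical kind is indispensable.
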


	Kim's proof \cite{Kim_inkang} in rank one
	used cross ratios on the geometric boundary to rule out
	the possibility that all translation lengths lie in a discrete subgroup of
	\(\mathbb R\).
	Benoist's proof is algebraic and proximal in nature. It uses Schottky semigroups, precise estimates for the Jordan projections of matrix products, and an interpolation argument based on Hardy fields \cite{Benoist97}. Quint \cite{Quint_Schottky} later gave an alternative approach based on asymptotic estimates for the structural defects $\lambda(g_nh_n)-\lambda(g_n)-\lambda(h_n)$, where $g_n$ and $h_n$ are certain sequences of words in two elements $g, h \in \Gamma$ of length $\ll n$. A related argument is given in the book of Benoist--Quint \cite{BenoistQuint16} based on the related notion of multicross-ratios $\lim_{n \to \infty} \bigl(\lambda(g^nh^n)-\lambda(g^n)-\lambda(h^n)\bigr)$ of so-called transversally loxodromic elements $g, h \in \Gamma$.

	In this paper, the problem is translated into smooth boundary dynamics for
	an expanding Schottky map.  After passing to a suitable Schottky subgroup, we code the limit set $\Lambda_\Gamma$ by infinite reduced words and realize the Jordan projections of $\Gamma$ as the periods of an $\aL$-valued Busemann return map
	\[
	\ret:\Lambda_\Gamma\to \aL
	\]
	for a uniformly expanding shift map. For a nonzero linear form $\psi\in\aL^*$, the problem is then reduced to ruling out a lattice constraint $c\Z$ on the periods of the scalar return function $\psi\circ \ret$. 
	
	The possible lattice obstruction is detected through a two-branch asymptotic
	discrepancy. Using Liv\v{s}ic theory, we show that, if the scalar return function
	\(\psi\circ K\) had periods contained in a discrete subgroup of \(\mathbb R\),
	then every such scalar discrepancy would be locally constant on the limit set.
	We therefore introduce a nonconstant discrepancy condition, requiring that for
	some pair of backward Schottky branches, the scalar discrepancy have a nonzero
	derivative at a limit point. The main geometric step is to prove this condition
	for every nonzero \(\psi\in\mathfrak a^*\).
	Geometrically, the discrepancy is the \(A\)-component of a rectangular holonomy
	obtained by alternating stable and unstable horospherical moves. Differentiating
	this holonomy reduces the required nonvanishing to a Lie bracket computation.
	The final contradiction uses the fact that the limit set is not locally contained
	in a lower-dimensional smooth submanifold.
	
	This reformulation places the qualitative non-arithmeticity problem in the same boundary-dynamical framework that underlies thermodynamic approaches to counting, renewal theorems, and local mixing for Anosov subgroups.

	Finally, we point out that the nonconstant discrepancy criterion is not
	specific to Lie groups. It applies more generally to expanding
	systems in which the relevant periodic data arise as periods of a return
	function and whose underlying invariant set satisfies a suitable nondegeneracy property. As an
	illustration, in \cref{subsec:rational-map-analogue},
	we give a direct proof of a qualitative density statement for multiplier spectra of hyperbolic rational maps. This statement is much weaker than the prime orbit theorem of Oh--Winter
	\cite{OW17}, and is also subsumed by the recent more general theorem  of
	Ji--Xie--Zhang \cite{JXZ26}, whose proof is arithmetic and algebraic in nature. The point of including it here is different: in the hyperbolic case, the conclusion follows from the same dynamical criterion as above, using only the nonconstant discrepancy condition and the local nondegeneracy of the Julia set. 
	
	\subsection*{Acknowledgements}
	Part of this research was performed while the authors were visiting the Mathematical Sciences Research Institute (MSRI), now becoming the Simons Laufer Mathematical Sciences Institute (SLMath), which is supported by the National Science Foundation (Grant No. DMS-2424139).
	
	\section{Preliminaries}
	
	Let $G$ be a connected semisimple real algebraic group and $\g = \operatorname{Lie}(G)$ be its Lie algebra.
	Fix an Iwasawa decomposition
	\[
	G=KAN,
	\]
	and fix a minimal parabolic subgroup with Langlands decomposition
	\[
	P=MAN.
	\]
	Here $K$ is a maximal compact subgroup,
	$A$ is (the identity component of) a maximal real split torus, $M$ is a compact group commuting with $A$, and $N$ is a maximal horospherical subgroup.
	Let $\check P=MA\check N$ be the opposite minimal parabolic, where ${\check N}$ is a maximal horospherical subgroup opposite to $N$.
	Then $P\cap {\check P}=MA$. If $w_0$ denotes a representative of the longest Weyl element, then
	\[
	\check P=w_0Pw_0^{-1}.
	\]
	
	We write $\fa=\operatorname{Lie}(A)$, $\mathfrak n=\operatorname{Lie}(N)$, $\check{\mathfrak n}=\operatorname{Lie}(\check N)$, and $\mathfrak m=\operatorname{Lie}(M)$ for the corresponding Lie algebras. We use the vector space decomposition
	\[
	\g=\cnL\oplus \aL\oplus \mL\oplus \nL
	\]
	and denote by
	\[
	\pi_{\aL}:\g\to\aL
	\]
	the corresponding projection.

	Fix the unique positive Weyl chamber $\aL^+$ such that $N$ is contracting, i.e., as $t\to +\infty$,
	\[
	\exp(-tv)n\exp(tv) \to  e,
	\qquad
	n \in N,
	\quad
	v \in \operatorname{int}(\LieA^+).
	\]
	Let $\Pi \subset \aL^*$ be the associated set of simple roots. We write $\g_\alpha$ for the root space of any root $\alpha$. For $\alpha\in\Pi$, let $H_\alpha\in\aL$ be the vector dual to $\alpha$ under the Killing form $B$ restricted to $\aL$, i.e.,
	\[
	B(H,H_\alpha)=\alpha(H),\qquad H\in\aL;
	\]
	or equivalently,
	$\R H_\alpha$ is the $\aL$-component of $[\mathfrak g_\alpha,\mathfrak g_{-\alpha}]$. Note that $\{H_\alpha\}_{\alpha \in \Pi}$ forms a basis of $\LieA$.

	The Furstenberg boundary is given by
	\[
	\Fboundary:=G/P \cong K/M.
	\]
	For $g\in G$, set its forward and backward endpoints
	\[
	g^+:=gP\in\Fboundary,
	\qquad
	g^-:=gw_0 P\in \Fboundary.
	\]
	The open $G$-orbit in $\Fboundary\times\Fboundary$ is denoted by
	\[
	\Fboundary^{(2)}=\{(g P, gw_0P)\in \Fboundary\times \Fboundary: g\in G\}.
	\] We say that $\zeta$ and $\eta$ are in general position if $(\zeta, \eta)\in \Fboundary^{(2)}$.
	
	For a Zariski dense subgroup $\Gamma<G$, there exists a unique $\Ga$-minimal subset
	in $\Fboundary$ called the limit set of $\Gamma$ (cf. \cite{Benoist97}), which we denote by
	\[
	\limitset\subset\Fboundary.
	\]

	\section{Schottky coding of the limit set}
	Our use of Schottky coding follows a line of ideas going back, in rank one,
	to Lalley's symbolic and renewal-theoretic approach to Schottky groups
	\cite{Lal89}, and in higher rank, to Quint's work on Schottky subgroups of
	semisimple groups \cite{Quint_Schottky}. In this framework, finite reduced words
	encode group elements, infinite reduced words encode limit points, and
	deleting the first letter gives an expanding map on the limit set. 
	The Schottky contraction property gives uniformly contracting inverse
	branches. 
	
	The following definition is a slight modification of the definition given in \cite{Quint_Schottky}. 
	Fix a Riemannian metric $d_\F$ on $\F$, say the one induced by a bi-invariant Riemannian metric on $K$.
	\begin{definition}[Schottky generating set]
		\label{def:Schottky}
		A nonempty finite symmetric subset \(\mathcal A\subset G\) is called a
		\emph{Schottky generating set} if there exist \(\tau\in(0,1)\) and open subsets
		\[
		\{\Sball_\gamma\subset\Fboundary\}_{\gamma\in\mathcal A},
		\qquad
		\{\SBall_\gamma\subset\Fboundary\}_{\gamma\in\mathcal A},
		\]
		such that:
		\begin{enumerate}
			\item \(\overline{\Sball_\gamma}\subset\SBall_\omega\) for all
			\(\gamma,\omega\in\mathcal A\) with \(\gamma\neq\omega^{-1}\);
			
			\item \(\Sball_\gamma\times\Sball_\omega\subset\Fboundary^{(2)}\) for all
			\(\gamma,\omega\in\mathcal A\) with \(\gamma\neq\omega\);
			
			\item the restriction \(\gamma|_{\SBall_\gamma}\) is \(\tau\)-Lipschitz and
			\[
			\gamma\SBall_\gamma\subset\Sball_\gamma
			\]
			for all \(\gamma\in\mathcal A\);
			
			\item
			\[
			\bigcap_{\gamma\in\mathcal A}\SBall_\gamma\neq\varnothing .
			\]
		\end{enumerate}
		The subgroup \(\langle\mathcal A\rangle\) is called a \emph{Schottky subgroup}.
	\end{definition}
	
	We have the following lemma (cf. \cite[Proposition 4.3]{Benoist97}, \cite{ELO_anosov}).
	
	\begin{lemma}
		\label{elo2} Every Zariski dense subgroup of \(G\) contains a Zariski dense Schottky
		subgroup.
	\end{lemma}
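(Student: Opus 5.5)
We construct the Schottky subgroup by ping-pong with loxodromic elements in general position. We use the standard facts that a Zariski dense $\Gamma$ contains loxodromic elements, and that for Zariski dense $\Gamma$ one can choose loxodromic elements in "generic" configuration; both are from \cite{Benoist97}, as in the reference for Lemma \ref{elo2}. Concretely, we first choose loxodromic elements $\gamma_1,\dots,\gamma_k\in\Gamma$ such that:
\begin{enumerate}
\item each $\gamma_i$ has attracting and repelling fixed points $\gamma_i^{\pm}$ with $(\gamma_i^+,\gamma_i^-)\in\Fboundary^{(2)}$;
\item all the points $\gamma_i^{\pm}$ are pairwise in general position whenever they correspond to distinct elements of $\{\gamma_i^{\pm1}\}$;
\item the group $\langle\gamma_1,\dots,\gamma_k\rangle$ is Zariski dense.
\end{enumerate}
For the last condition we would use that the Zariski closure of a subgroup generated by finitely many elements of $\Gamma$ stabilizes. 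Noetherianity gives finitely many $h_1,\dots,h_k\in\Gamma$ generating a Zariski dense subgroup. We then replace each $h_j$ by a loxodromic element $g h_j^{\pm}$ or $g^{m}h_j g^{m}$ built from a fixed loxodromic $g$ in general position with everything. Since such elements generate a group containing $g^m$ and $h_j$ (up to finite index after taking powers), Zariski density persists. The general position conditions are Zariski open in the choice, so they can be arranged by conjugating within $\Gamma$ using Zariski density of $\Gamma$.

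Next we take high powers $\mathcal A=\{\gamma_i^{\pm n}\}$. For $\gamma=\gamma_i^{n}$ we let $\Sball_\gamma$ be a small ball around $\gamma_i^+$. We let $\SBall_\gamma$ be the complement of a small closed neighbourhood of the set of points not in general position with $\gamma_i^-$; this set is a proper Schubert-type subvariety. The key dynamical fact is that for a loxodromic $h$, $h^n$ converges uniformly on compact subsets of $\{\xi:(\xi,h^-)\in\Fboundary^{(2)}\}$ to $h^+$. Moreover, the derivative of $h^n$ there is at most $Ce^{-n\min_\alpha\alpha(\lambda(h))}$. This yields property (3) with $\tau<1$ for $n$ large. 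Property (1) holds because $\gamma_j^{+}$ for $\gamma_j\ne\gamma_i^{-1}$ is in general position with $\gamma_i^-$; hence small balls around it lie in $\SBall_{\gamma_i}$. Property (2) follows from the general position of distinct attracting points together with openness of $\Fboundary^{(2)}$. For property (4), the intersection of finitely many complements of neighbourhoods of proper subvarieties is nonempty once the neighbourhoods are small, because a finite union of proper subvarieties has empty interior.

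Finally, powers do not destroy Zariski density. The Zariski closure $H$ of $\langle\gamma_i^n\rangle$ has finite index in the Zariski closure of $\langle\gamma_i\rangle$, since each $\gamma_i$ normalizes $H$ and some power lies in it. As $G$ is connected, $H=G$. The main obstacle we expect is the first step: simultaneously arranging loxodromic generators, mutual general position of all endpoints, and Zariski density. The delicate point is that general position is required for the attracting and repelling points of \emph{all} pairs. For this we would invoke Benoist's lemma that the set of $g\in\Gamma$ with prescribed transversality to finitely many given flags is Zariski dense in $G$, and choose the generators inductively.
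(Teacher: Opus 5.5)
Step 2 of your proposal is the standard ping-pong and is fine; the $\tau$-Lipschitz bound on the non-convex sets $\SBall_\gamma$ needs the routine split into close and far-apart pairs. The genuine gap is Step 3.

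You claim that each $\gamma_i$ normalizes the Zariski closure $H$ of $\langle\gamma_1^n,\dots,\gamma_k^n\rangle$. This is unfounded. It holds for the Zariski closure of the cyclic group $\langle\gamma_i^n\rangle$ alone, but finite index there says nothing about the group generated. Even granting normality, torsion images of the generators in $G/H$ would not make $G/H$ finite (think of $\mathrm{PSL}_2(\mathbb Z)$).

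In fact the conclusion is false. In $G=\mathrm{SL}_3(\R)$ let $Q=x_1x_3+x_2^2$, $S=\mathrm{SO}(Q)$, and $g_0=\diag(a,-1,-a^{-1})$ with $a>1$. Then $g_0$ is loxodromic and $g_0^2\in S$. But $Q\circ g_0=-x_1x_3+x_2^2$ is not proportional to $Q$, so $g_0$ does not normalize $S$. For generic $s_1,s_2\in S$, the elements $\gamma_i=s_ig_0s_i^{-1}$ meet every requirement of your Step 1:
\begin{enumerate}
\item they are loxodromic with pairwise transverse endpoints;
\item their Zariski closure $L$ is $G$. Indeed, $L$ contains the Zariski closure of $\langle\gamma_1^2,\gamma_2^2\rangle$, which is $S$. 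The $S$-module $\mathfrak{sl}_3/\mathfrak{so}(Q)$ is irreducible, so $L^\circ\in\{S,G\}$, and $L^\circ=S$ would force $\gamma_1$ to normalize $S$.
\end{enumerate}
Yet $\gamma_i^{\pm n}\in S$ for every even $n$, so your $\cA$ is never Zariski dense for even $n$.

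The repair is to restrict $n$. Let $Z_i$ be the Zariski closure of $\langle\gamma_i\rangle$ and $d_i=[Z_i:Z_i^\circ]$. Then $Z_i/Z_i^\circ$ is cyclic, generated by $\gamma_i$. The Zariski closure of $\langle\gamma_i^n\rangle$ has finite index in $Z_i$, hence contains $Z_i^\circ$. So for $n\equiv1\pmod{\operatorname{lcm}_i d_i}$ it contains $\gamma_i^nZ_i^\circ\ni\gamma_i$, and density survives. There are infinitely many such $n$, so Step 2 still applies.

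Step 1, which you correctly identify as the crux, is not actually carried out, and the devices you suggest do not do it:
\begin{enumerate}
\item The elements $g^mh_jg^m$ generate a group containing the $h_j$ only if $g^m$ is also a generator. Then all their endpoints accumulate at $g^\pm$, so transversality among them and with $g^\pm$ is not automatic.
\item ``Up to finite index after taking powers'' re-invokes the faulty Step 3.
\item Conjugating each generator separately by some $\delta_i\in\Gamma$ changes the subgroup generated. Zariski density of $\langle\delta_1\gamma_1\delta_1^{-1},\dots,\delta_k\gamma_k\delta_k^{-1}\rangle$ is not visibly a Zariski-open condition on $(\delta_i)$, so it cannot simply be intersected with the open transversality condition.
\end{enumerate}

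The missing idea is to pass to connected groups, where density does become an open condition. Fix one loxodromic $\gamma\in\Gamma$, let $T$ be the identity component of the Zariski closure of $\langle\gamma\rangle$, of index $d$, and set $\gamma_i=\delta_i\gamma\delta_i^{-1}$.
\begin{enumerate}
\item That the subspaces $\Ad(\delta_i)\mathrm{Lie}(T)$ generate $\g$ as a Lie algebra is a Zariski-open condition on $(\delta_1,\dots,\delta_k)\in G^k$, namely a rank condition on iterated brackets.
\item Some tuple in $\Gamma^k$ satisfies it for $k$ large. The Lie algebra generated by all $\Ad(\delta)\mathrm{Lie}(T)$, $\delta\in\Gamma$, is $\Ad(G)$-invariant by Zariski density. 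It contains $\log\gamma_h$, since the one-parameter group through $\gamma_h$ lies in $T$. Hence it equals $\g$ when $G$ has no compact factors.
\item Intersecting with the transversality condition on the $2k$ points $\delta_i\gamma^\pm$, and using that $\Gamma^k$ is Zariski dense in $G^k$, yields $\delta_i\in\Gamma$ with both properties.
\item For $n$ large and divisible by $d$, the Zariski closure of $\langle\gamma_1^n,\dots,\gamma_k^n\rangle$ contains every $\delta_iT\delta_i^{-1}$, hence equals $G$, and your Step 2 finishes the proof.
\end{enumerate}
The paper gives no argument of its own here; it cites \cite[Proposition 4.3]{Benoist97} and \cite{ELO_anosov}.
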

	
	Until the final proof of \cref{m2}, we work with a fixed Zariski dense
	Schottky subgroup
	\[
	\Gamma=\langle\mathcal A\rangle<G
	\]
	generated by a finite symmetric Schottky generating set \(\mathcal A \subset G\).
	
	A finite word $\gamma=\gamma_0\gamma_1\cdots\gamma_{n-1} \in \Gamma$, where $\gamma_i\in\gen$,
	is called \emph{reduced} if $\gamma_{i+1}\neq\gamma_i^{-1}$ for all $i$. We use the convention that the empty word gives the identity element $e \in \Gamma$.
	We denote by $\gen^{(n)} \subset \Gamma$ the set of reduced words of length $n \geq 0$ in the alphabet $\gen$; thus $\gen^{(0)}=\{e\}$ and $\gen^{(1)}=\gen$.
	For a nontrivial reduced word $\gamma=\gamma_0\gamma_1\cdots\gamma_{n-1}$, write
	\[
	\sigma(\gamma):=\gamma_1\gamma_2\cdots\gamma_{n-1}
	\]
	for the word obtained by deleting the first letter.
	For each reduced word $\gamma=\gamma_0\cdots\gamma_{n-1}$ of length $n\geq1$, define the associated $n$th-level Schottky domain by
	\[
	\Sball_\gamma:=\gamma_0\gamma_1\cdots\gamma_{n-2}\Sball_{\gamma_{n-1}};
	\]
	or equivalently,
	\[
	\Sball_\gamma=\gamma_0\Sball_{\sigma(\gamma)}.
	\]
	If the concatenation $\gamma\omega$ of two reduced words $\gamma$ and $\omega$ is itself reduced, then
	\[
	\gamma\Sball_\omega=\Sball_{\gamma\omega}.
	\]
	For $n\geq1$, set
	\[
	\Sball^{(n)}:=\bigsqcup_{\gamma\in\gen^{(n)}}\Sball_\gamma.
	\]
	For $\gamma \in \gen$, it will be convenient to denote
	\[
	{\mathbf b}_\gamma^-:={\mathbf b}^{(1)}- {\mathbf b}_\gamma.
	\]
	
	For an infinite reduced word $\xi=\xi_0\xi_1\cdots$ and $m \geq 0$, write
	\[
	\xi_{[m]}:=\xi_0\xi_1\cdots\xi_{m-1}\in\gen^{(m)}.
	\]
	Fix a point
	\[
	o\in\bigcap_{\gamma\in\gen}\SBall_\gamma.
	\]
	Every infinite reduced word $\xi=\xi_0\xi_1\cdots$
	determines a point of the limit set by
	\[
	\xi^+:=\lim_{k\to\infty}\xi_{[k + 1]} o=\lim_{k\to\infty}\xi_0\xi_1\cdots\xi_k o.
	\]
	This limit is independent of the choice of $o$ and is equivalently given by
	\[
	\{\xi^+\}=\bigcap_{k\geq0}\xi_{[k]}\Sball_{\xi_k}=\bigcap_{k\geq0}\xi_0\xi_1\cdots\xi_{k-1}\Sball_{\xi_k}.
	\]
	This gives a  homeomorphism from the space $\Sigma^+$ of infinite reduced words, with its usual topology, onto $\La_\Ga$ \cite[Proposition 3.3]{Quint_indicator}.

	\begin{remark}
		It follows from definitions that every nontrivial element of $\Gamma$ is loxodromic and any two distinct points of $\La_\Gamma$ are in general position.
	\end{remark}

	We use the same notation $d_{\mathcal F}$ for its restriction to $\Lambda_\Gamma$. The Schottky contraction
	property implies that if two infinite reduced words have a common prefix of
	length \(N\), then their corresponding points in \(\limitset\) are
	\(O(\tau^N)\)-close with respect to \(d_{\mathcal F}\). 
	
	For some \(\vartheta \in (0, 1)\), we also use the symbolic metric \(d_\vartheta\) on $\Sigma^+$, defined by
	\[
	d_\vartheta(\xi,\zeta)=\vartheta^{N(\xi,\zeta)},
	\]
	where \(N(\xi,\zeta)\) is the first index at which
	\(\xi\) and \(\zeta\) differ. Hence the coding map from $(\Sigma^+, d_\vartheta)$ to
	\((\limitset,d_{\mathcal F})\) is bi-H\"older. Note that it can be made Lipschitz in the forward direction by choosing $\vartheta \geq \tau$.

	We shall use the same symbol $\xi$ for the infinite reduced word and the corresponding point $\xi^+\in\limitset$ when no confusion can arise.
	If $\gamma=\gamma_0\gamma_1\cdots\gamma_{n-1}\in\gen^{(n)}$
	is cyclically reduced, then the infinite word
	\[
	\xi:=\gamma\gamma\cdots\in\limitset
	\]
	is reduced and defines a point in $\La_\Ga$. This point is the
	attracting fixed point of \(\gamma\). Indeed, the Schottky contraction
	property implies that $\gamma^k z\xrightarrow{k \to \infty} \xi$
	for every \(z\in \SBall_{\gamma_{n-1}}\).

	\begin{definition}[Shift map]
		Deleting the first Schottky letter of infinite reduced words defines  the \emph{shift map}
		\[
		\sigma:\limitset\to\limitset,
		\qquad
		\sigma(\xi_0\xi_1\cdots)=\xi_1\xi_2\cdots.
		\] 
		We use the same notation for the extension
		\[
		\sigma:\Sball^{(n+1)}\to\Sball^{(n)},
		\]
		defined by
		\[
		\sigma(x)=\gamma^{-1}x,
		\qquad
		\text{for }x\in\gamma\bigl(\Sball^{(n)}-\Sball_{\gamma^{-1}}\bigr),
		\quad \gamma\in\gen.
		\] \end{definition}
	Thus the inverse branches are
	the maps \(x\mapsto \gamma x\), defined on
	\(\Sball^{(n)}-\Sball_{\gamma^{-1}}\),  and these are uniformly
	contracting by the Schottky property. Hence \(\sigma\) is expanding.

	\section{The Busemann return map and the nonconstant discrepancy condition}
	
	In this section, we first introduce the Busemann return map $\ret$ used throughout the proof. It is
	the contribution of the \(\LieA\)-valued Busemann function corresponding to the first Schottky
	letter removed by the shift map. Its periodic sums recover the Jordan
	projections of elements of \(\Gamma\). We then formulate a non-lattice
	criterion for scalar projections of $\ret$ as a certain geometric nondegeneracy statement for a two-branch asymptotic discrepancy. We finally show that it indeed implies the non-arithmeticity of the scalar projections of the Jordan spectrum $\psi(\lambda(\Gamma)) \subset \R$.
	
	\subsection{The Busemann return map}
	The Iwasawa cocycle
	\[
	\mathsf c:G\times\Fboundary\to\LieA
	\]
	is a smooth map defined as follows: for $g \in G$ and
	\(\xi=kP\in\Fboundary\) with \(k\in K\), there exists a unique element $c(g,\xi)$ satisfying the condition
	\[
	gk\in K\exp(\mathsf c(g,\xi))N .
	\]
	The $\LieA$-valued Busemann function $\beta: \Fboundary \times G \times G \to \LieA$ is defined as follows: for \(\xi\in\Fboundary\) and \(g,h\in G\), set
	\[
	\beta_\xi(g,h)
	:=
	\mathsf c (g^{-1},\xi)-\mathsf c(h^{-1},\xi);
	\]
	in particular,
	\[
	\beta_\xi(e,g)=-\mathsf c(g^{-1},\xi).
	\]
	It enjoys the following properties for all $\xi \in \Fboundary$ and $g, h, q \in G$:
	\begin{itemize}
		\item (additivity) $\beta_\xi(g, q) = \beta_\xi(g, h) + \beta_\xi(h, q)$;
		\item ($G$-equivariance) $\beta_{g\xi}(gh, gq) = \beta_\xi(h, q)$.
	\end{itemize}
	Note that additivity of the Busemann function follows by combining the cocycle identity inherited from $\mathsf c$ and $G$-equivariance.
	Moreover, if \(g \in G\) is loxodromic with attracting fixed point \(g^+\), then
	\[
	\beta_{g^+}(e,g)=\lambda(g)
	\]
	(cf. \cite[Lemma 3.5]{LeeOh23}).
	
	\begin{definition}[Busemann return map]
		\label{def:first-return-vector-map}
		The $\fa$-valued \emph{Busemann return map}
		\[
		\ret:\Sball^{(2)}\to\LieA
		\]
		is defined by
		\[
		\ret(x):=\beta_x(e,\gamma),
		\qquad
		\text{for }x\in\gamma\bigl(\Sball^{(1)}-\Sball_{\gamma^{-1}}\bigr),
		\quad \gamma\in\gen.
		\]
	\end{definition}
	
	Observe that for $\omega \in \gen - \{\gamma^{-1}\}$, the shift map \(\sigma\) sends $x\in\Sball_{\gamma\omega}$
	to  $\sigma x=\gamma^{-1}x\in\Sball_\omega$ and \(\ret(x)\) records the Busemann displacement associated to this one
	Schottky step.
	
	Note that \(\ret\) is smooth with finite $C^\ell$ norms for $\ell \geq 0$ since it is defined in terms of the Busemann function on each second-level Schottky domain, and therefore Lipschitz with respect to
	\(d_{\mathcal F}\). It follows that \(\ret|_{\Lambda_\Gamma}\)
	is H\"older continuous with respect to the symbolic metric \(d_\vartheta\).
	
	For $\psi \in \LieA^* - \{0\}$, we often consider the function ${{\mathsf K}^\psi}:=\psi \circ \ret$. We sometimes refer to it as a \emph{scalar return function}.
	
	For $n\geq0$, define
	$\ret_n:\Sball^{(n+1)}\to\LieA$
	by
	\[
	\ret_n(x):=\sum_{j=0}^{n-1}\ret(\sigma^j x),
	\]
	with the convention that $\ret_0=0$. We use similar notations as above for other functions on $\Sball^{(2)}$ or $\limitset$, such as for the scalar return function ${{\mathsf K}^\psi}$, or otherwise.
	
	\begin{lemma}
		\label{lem:Jordan-proj-in-terms-of-return-vector}
		Let $\gamma=\gamma_0\gamma_1\cdots\gamma_{n-1}\in\gen^{(n)}$ for $n \geq 1$
		be cyclically reduced, and let $ \xi:=\gamma\gamma\cdots\in\limitset$
		be the attracting fixed point of $\gamma$. Then
		\[
		\lambda(\gamma)=\ret_n(\xi).
		\]
	\end{lemma}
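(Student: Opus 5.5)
The plan is to unfold the Birkhoff sum $\ret_n(\xi)$ along the periodic orbit of $\xi$, telescope it with the additivity and $G$-equivariance of the Busemann function, and then apply the identity $\beta_{\gamma^+}(e,\gamma)=\lambda(\gamma)$ for loxodromic $\gamma$.

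First, I identify the orbit. Since $\xi=\gamma\gamma\cdots$ is an infinite reduced word, deleting the first letter $j$ times gives
\[
\sigma^j\xi=\gamma_j\gamma_{j+1}\cdots\gamma_{n-1}\gamma\gamma\cdots .
\]
This is the point $\gamma_{[j]}^{-1}\xi$, where $\gamma_{[j]}:=\gamma_0\cdots\gamma_{j-1}$. Indeed, for a point $x\in\gamma_j(\Sball^{(1)}-\Sball_{\gamma_j^{-1}})$ the extension of $\sigma$ acts by $x\mapsto\gamma_j^{-1}x$. Moreover $\sigma^j\xi$ lies in the second-level domain $\Sball_{\gamma_j\gamma_{j+1}}$, with indices taken cyclically mod $n$. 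Cyclic reducedness guarantees that $\gamma_{n-1}\gamma_0$ is reduced, so this also holds at $j=n-1$. Hence, by Definition~\ref{def:first-return-vector-map}, $\ret(\sigma^j\xi)=\beta_{\sigma^j\xi}(e,\gamma_j)$.

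Next, I transport each term back to the base point $\xi$ by $G$-equivariance, applied with $g=\gamma_{[j]}$:
\[
\beta_{\gamma_{[j]}^{-1}\xi}(e,\gamma_j)=\beta_{\xi}(\gamma_{[j]},\gamma_{[j]}\gamma_j)=\beta_\xi(\gamma_{[j]},\gamma_{[j+1]}).
\]
Summing over $0\le j\le n-1$ and using additivity, the sum telescopes:
\[
\ret_n(\xi)=\beta_\xi(e,\gamma_{[n]})=\beta_\xi(e,\gamma).
\]

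Finally, $\gamma$ is loxodromic by the Remark, and $\xi$ is its attracting fixed point $\gamma^+$, as observed just before the definition of the shift map. The cited identity $\beta_{\gamma^+}(e,\gamma)=\lambda(\gamma)$ then gives $\ret_n(\xi)=\lambda(\gamma)$.

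The only delicate step is the bookkeeping: checking that each $\sigma^j\xi$ lies in the correct branch domain, so that the letter removed is exactly $\gamma_j$ and $\ret$ is evaluated with $\gamma_j$. This is precisely where cyclic reducedness enters, at the wrap-around index $j=n-1$. It is also worth confirming that the identification of $\xi$ with $\gamma^+:=\gamma P$ matches the convention in \cite[Lemma 3.5]{LeeOh23}. This holds because the Schottky dynamics shows that $\xi$ is the attracting point, which coincides with $g^+$ for loxodromic $g$.
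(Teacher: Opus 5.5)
Your proof is correct and is essentially the paper's argument. You place $\sigma^j\xi=\gamma_{[j]}^{-1}\xi$ in $\Sball_{\gamma_j\gamma_{j+1}}$, using cyclic reducedness at the wrap-around, so that $\ret(\sigma^j\xi)=\beta_{\sigma^j\xi}(e,\gamma_j)$, and then telescope via equivariance and additivity (exactly the ``cocycle identity'' the paper invokes) to get $\ret_n(\xi)=\beta_\xi(e,\gamma)=\lambda(\gamma)$.

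One small correction to your closing remark: the attracting fixed point of a loxodromic $\gamma$ is in general not the frame endpoint $\gamma P$, since the paper's notation $g^+=gP$ is overloaded at that point. The cited identity should therefore be read with $\gamma^+$ denoting the attracting fixed point, which is all your argument actually uses.
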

	
	\begin{proof}
		Since $\gamma$ is cyclically reduced, the infinite word $\xi$
		is reduced. Hence, for $0\leq j\leq n-1$, the point $\sigma^j\xi$ lies in $\Sball_{\gamma_j\gamma_{j+1}}$, where the indices are taken modulo $n$. Therefore
		\[
		\ret_n(\xi)=\sum_{j=0}^{n-1}\ret(\sigma^j\xi)=\sum_{j=0}^{n-1}\beta_{\sigma^j\xi}(e,\gamma_j).
		\]
		For $0\leq j\leq n-1$, we further have
		\[
		\sigma^j\xi=\xi_{[j]}^{-1}\xi=(\gamma_0\gamma_1\cdots\gamma_{j-1})^{-1}\xi,
		\]
		and so by the cocycle identity,
		\[
		\begin{aligned}
			\beta_\xi(e,\gamma)
			=\sum_{j=0}^{n-1}\beta_{(\gamma_0\cdots\gamma_{j-1})^{-1}\xi}(e,\gamma_j) =\sum_{j=0}^{n-1}\beta_{\sigma^j\xi}(e,\gamma_j).
		\end{aligned}
		\]
		Thus $\ret_n(\xi)=\beta_\xi(e,\gamma)$. Since $\xi$ is the attracting fixed point of $\gamma$, we have $\beta_\xi(e,\gamma)=\lambda(\gamma)$.
	\end{proof}
	
	\begin{remark}
		For an arbitrary nontrivial element of $\Gamma$, choose the cyclically reduced representative of its conjugacy class. Since the Jordan projection is conjugacy invariant, \cref{lem:Jordan-proj-in-terms-of-return-vector} accounts for the Jordan projections of all nontrivial elements of $\Gamma$.
	\end{remark}
	
	\subsection{The nonconstant discrepancy condition}
	\begin{definition}[Discrepancy]
		For an infinite reduced word $\xi=\xi_0\xi_1\cdots$, define the
		\emph{(asymptotic) discrepancy},
		\[
		\Delta_\xi: \Sball_{\xi_0}^- \times \Sball_{\xi_0}^-\to \LieA,
		\]
		as follows: for \(x,x'\in\Sball_{\xi_0}^-\), set
		\[
		\begin{aligned}
			\Delta_\xi(x,x')
			&:=
			\lim_{m\to\infty}
			\left(
			\ret_m(\xi_{[m]}^{-1}x)
			-
			\ret_m(\xi_{[m]}^{-1}x')
			\right) \\
			&=
			\sum_{k=1}^{\infty}
			\left(
			\ret(\xi_{[k]}^{-1}x)
			-
			\ret(\xi_{[k]}^{-1}x')
			\right).
		\end{aligned}
		\]
		For every $\psi \in \LieA^* - \{0\}$, we write $\Delta^\psi_\xi := \psi \circ \Delta_\xi$ and call it a \emph{scalar (asymptotic) discrepancy}.
	\end{definition}
	
	The scalar asymptotic discrepancy is often called the \emph{temporal distance function} in the literature. The above expression is well-defined and produces a $C^\infty$ function for the following reason.
	The Schottky contraction property implies that the inverse branches
	\[
	x\mapsto \xi_{[k]}^{-1}x
	\]
	are uniformly contracting on  
	\(\Sball_{\xi_0}^-\). Since for every $\ell \geq 0$, the Busemann return map \(\ret\) is smooth with a finite $C^\ell$ norm, the summands
	\[
	(x,x')\mapsto \ret(\xi_{[k]}^{-1}x)-\ret(\xi_{[k]}^{-1}x')
	\]
	decay exponentially in \(C^\ell\) as $k \to \infty$. More explicitly, for every $\ell \geq 0$, on $\Sball_{\xi_0}^-\times\Sball_{\xi_0}^-$, we have
	\[
	\left\| \ret(\xi_{[k]}^{-1}\bullet_1)-\ret(\xi_{[k]}^{-1}\bullet_2) \right\|_{C^\ell(\Sball_{\xi_0}^-\times\Sball_{\xi_0}^-)} \ll \tau^k \qquad\text{for all } k\geq 1.
	\]
	Consequently, the series defining \(\Delta_\xi\) converges in \(C^\infty\).

	\begin{definition}[Two-branch discrepancy]
		For two infinite reduced words $\zeta$ and $\eta$, define the
		\emph{two-branch (asymptotic) discrepancy},
		\[
		\Phi_{\zeta,\eta}:  (\Sball_{\zeta_0}^- \cap \Sball_{\eta_0}^-)\times  (\Sball_{\zeta_0}^- \cap \Sball_{\eta_0}^-) \to \LieA
		\]
		as follows: for all $x, x' \in \Sball_{\zeta_0}^- \cap \Sball_{\eta_0}^-$, set
		\[
		\Phi_{\zeta,\eta}(x,x'):=\Delta_\zeta(x,x')-\Delta_\eta(x,x').
		\]
		For every $\psi \in \LieA^* - \{0\}$, we write $\Phi^\psi_{\zeta,\eta} := \psi \circ \Phi_{\zeta,\eta}$ and call it a \emph{scalar two-branch (asymptotic) discrepancy}.
	\end{definition}
	
	Since $\Delta_\zeta$ and $\Delta_\eta$ are $C^\infty$, so is \(\Phi_{\zeta,\eta}\) on its domain. Here and in the rest of the paper,
	$D_x$ denotes the differential with respect to the first variable. More explicitly for $\Phi_{\zeta,\eta}$, for instance, we have the following: for $v\in T_{x_0}\Fboundary$,
	\[
	D_x\Phi_{\zeta,\eta}(x_0,x_0')(v)
	:=
	\left.\frac{d}{dt}\right|_{t=0}
	\Phi_{\zeta,\eta}(c(t),x_0'),
	\]
	where $c$ is any smooth curve in $\Fboundary$ satisfying $c(0)=x_0$ and $c'(0)=v$.
	
	\begin{definition}[Nonconstant discrepancy condition]
		\label{def:non-lattice-condition}
		Let $\psi\in\LieA^*-\{0\}$ and set
		\[
		{{\mathsf K}^\psi}:=\psi\circ\ret:\Sball^{(2)}\to\R.
		\]
		We say that the scalar return function ${{\mathsf K}^\psi}$ satisfies the \emph{nonconstant discrepancy condition} if there exist two infinite reduced words $\zeta$ and $\eta$, points
		$x_0,x_0'
		\in
		\limitset \cap \bigl(\Sball_{\zeta_0}^- \cap \Sball_{\eta_0}^-\bigr)$,
		and a tangent vector $v\in T_{x_0}\Fboundary$ such that
		\[
		D_x\Phi^\psi_{\zeta,\eta}(x_0,x_0')(v) = \psi\bigl(D_x\Phi_{\zeta,\eta}(x_0,x_0')(v)\bigr) \neq0.
		\]
	\end{definition}
	
	Geometrically, this condition says that, after applying \(\psi\), the
	two-branch asymptotic discrepancy is not locally constant in the first variable.

	\begin{remark}\label{nli2}
		The condition above is a qualitative version of the non-local integrability
		condition familiar from Dolgopyat-type arguments \cite{Dol98}. In the present paper we use
		the term ``nonconstant discrepancy condition'' to emphasize that we only need
		a single nonzero derivative of an asymptotic two-branch discrepancy. This is
		weaker than the quantitative finite-branch formulations of non-integrability
		used in transfer-operator estimates.
	\end{remark}

	For the proof of \cref{prop:non-lattice-criterion}, we need two lemmas. The first one is the Liv\v{s}ic theorem for a one-sided subshift of finite type reformulated in our Schottky setting. Recall that via the Schottky coding, \((\limitset,\sigma)\) is conjugate to a one-sided subshift of finite type.

	\begin{lemma}[Liv\v{s}ic theorem]
		\label{lem:Livsic-Schottky-shift}
		Let \(f:\limitset\to\mathbb R\) be H\"older continuous and let $ L=c\mathbb Z$
		for some \(c\geq 0\).
		Suppose that
		$$\{ f_n(\xi): \xi\in\Fix(\sigma^n), n\ge 1 \}\subset L $$
		where $f_n(\xi)=\sum_{j=0}^{n-1} f(\sigma^j(\xi)).$
		Then \(f\) is cohomologous to an $L$-valued function. More precisely, there exists
		a continuous function
		$\rho:\limitset\to\mathbb R/L$
		such that
		\[
		f=\rho-\rho\circ\sigma
		\qquad \pmod{L}.
		\]
	\end{lemma}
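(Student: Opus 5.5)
The plan is to reduce to the classical Livšic theorem for one-sided subshifts of finite type, taking the conjugacy of $(\limitset,\sigma)$ with a one-sided SFT via the Schottky coding as given. The case $c=0$ is the usual real-valued Livšic theorem; there all periodic sums vanish, and the continuous (indeed Hölder) transfer function is built as below with $L=\{0\}$. For $c>0$ we exponentiate. Set
\[
F:=\exp\bigl(2\pi i f/c\bigr):\limitset\to S^1 .
\]
$F$ is Hölder for $d_\vartheta$, because $f$ is Hölder and $\exp$ is smooth. The hypothesis says exactly that $\prod_{j=0}^{n-1}F(\sigma^j\xi)=1$ for every $\xi\in\Fix(\sigma^n)$. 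It therefore suffices to find a continuous $u:\limitset\to S^1$ with $F=u/(u\circ\sigma)$. Then we lift: writing $u=\exp(2\pi i\rho/c)$ with $\rho$ taking values in $\R/c\Z$ (continuous as a map into the circle), we get $f-\rho+\rho\circ\sigma\in c\Z$ pointwise, which is the claim.

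To construct $u$, I will follow the standard Livšic argument, adapted to the one-sided setting by passing through the two-sided shift or, more directly, by using backward branches. Since the subshift is topologically mixing on each relevant component (the alphabet has at least two letters and reduced words allow any transition except $\gamma\to\gamma^{-1}$), pick $\xi^*\in\limitset$ with dense forward orbit. Define $u(\sigma^n\xi^*):=\prod_{j<n}F(\sigma^j\xi^*)^{-1}$. The key estimate is uniform Hölder continuity of $u$ on this orbit. Suppose $\sigma^n\xi^*$ and $\sigma^m\xi^*$ with $m>n$ are $d_\vartheta$-close, i.e.\ share a long prefix of length $N$. Then the orbit segment from time $n$ to $m$ can be closed up into a periodic point $p$: take the word $\xi^*_n\cdots\xi^*_{m-1}$, which is cyclically reduced after possibly a small modification, or exactly so when $N\ge1$ since $\xi^*_m=\xi^*_n$. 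By the Anosov closing lemma in symbolic form, $p$ and the segment are exponentially close along the whole segment, measured backward from the end. Comparing $\prod F$ along the segment with the periodic product, which equals $1$, and using the Hölder property and the geometric series $\sum\vartheta^{k}$, we obtain
\[
|u(\sigma^m\xi^*)-u(\sigma^n\xi^*)|\ll d_\vartheta(\sigma^m\xi^*,\sigma^n\xi^*)^{\alpha}.
\]
Hence $u$ extends uniquely to a Hölder map on $\overline{\{\sigma^n\xi^*\}}=\limitset$, and the coboundary identity holds on the dense orbit, hence everywhere by continuity.

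The main obstacle is the closing estimate in the one-sided setting. Closeness of $\sigma^n\xi^*$ and $\sigma^m\xi^*$ controls only the future, whereas the error along the segment involves the letters $n,\dots,m-1$ compared with the periodic word. I plan to handle this as in the usual proof: the periodic point $p=(\xi^*_n\cdots\xi^*_{m-1})^\infty$ agrees with $\sigma^{n+j}\xi^*$ on its first $m-n-j+N$ letters. Summing $\vartheta^{m-n-j+N}$ over $j$ then gives $O(\vartheta^N)$, as required. If $\limitset$ happens not to be transitive, I would apply the same argument separately to each transitive component, noting that periodic points are dense.
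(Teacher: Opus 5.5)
Your proposal is correct. It is the standard Liv\v{s}ic argument: exponentiate to $F=e^{2\pi i f/c}$, define the transfer function along a dense forward orbit, and control it by the one-sided closing estimate. There, $\sigma^j p$ and $\sigma^{n+j}\xi^*$ share at least $m-n-j+N$ letters, so the errors sum to $O(\vartheta^{\alpha N})$. This is exactly what the paper outsources by citing Parry--Pollicott instead of proving it.

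Two cosmetic points. It is $\sigma^j p$, not $p$, that you should compare with $\sigma^{n+j}\xi^*$. And the Schottky alphabet of a Zariski dense $\Gamma$ has at least four letters, so the reduced-word shift is mixing and no decomposition into transitive components is needed.
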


	For a proof of the above lemma, we refer the reader to \cite[Proposition 3.7]{ParryPollicott1990} for \(L=\{0\}\), and \cite[Proposition 5.2]{ParryPollicott1990} for \(L=c\mathbb Z\) with \(c>0\).
	
	The second lemma we need is the following nondegeneracy property of the limit set, which was proved  by Edwards--Lee--Oh generalizing \cite{Winter15} for rank one.
	\begin{lemma}\cite[Lemma 2.11]{ELO_anosov}\label{elo}\label{lem:not-in-submanifold}
		For any open subset $\mathcal O\subset\Fboundary$ with $\limitset\cap\mathcal O\neq\varnothing$, the set $\limitset\cap\mathcal O$ is not contained in a smooth submanifold of $\Fboundary$ of lower dimension.
	\end{lemma}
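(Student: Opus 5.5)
The strategy is to argue by contradiction. We extract from a hypothetical submanifold $S\supset\limitset\cap\mathcal O$ a $\Gamma$-equivariant field of proper tangent subspaces along $\limitset$. Zariski density then upgrades this field to a $G$-invariant distribution on $\Fboundary$. Integrability forces that distribution to be the tangent bundle of the fibers of a projection $\Fboundary\to G/Q$ for some parabolic $Q\supsetneq P$, and this will contradict the structure of the limit set in $G/Q$.

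\emph{Step 1: tangent data.} Suppose $\limitset\cap\mathcal O\subset S$ with $\dim S<\dim\Fboundary$. For $x\in\limitset$ let $V_x\subset T_x\Fboundary$ be the linear span of the tangent cone of $\limitset$ at $x$, i.e.\ of all limits of normalized secants $(y_n-x)/|y_n-x|$ with $y_n\in\limitset$, $y_n\to x$, computed in a chart. By construction $V_{gx}=dg_x(V_x)$ for $g\in\Gamma$, and $V_x\subset T_xS$ is proper for $x\in\mathcal O$.

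To propagate properness to all of $\limitset$, I would use Schottky dynamics. Pick a loxodromic $\gamma\in\Gamma$ with attracting point in $\mathcal O$. For large $k$, $\gamma^k$ maps $\limitset$ minus a small neighborhood of $\gamma^-$ into $\mathcal O$, and by minimality finitely many $\Gamma$-translates of $\mathcal O$ cover $\limitset$. Hence $\dim V_x<\dim\Fboundary$ for every $x\in\limitset$.

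Let $d$ be the maximum of $\dim V_x$ over $\limitset$. Lower semicontinuity of $x\mapsto\dim V_x$ shows that the set $\limitset_d$ where $\dim V_x=d$ is relatively open and $\Gamma$-invariant. By minimality $\limitset_d=\limitset$, and $x\mapsto V_x$ becomes a continuous $\Gamma$-equivariant section over $\limitset$ of the Grassmann bundle $\mathrm{Gr}_d(T\Fboundary)$.

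\emph{Step 2: from $\Gamma$ to $G$.} The Grassmann bundle is a real algebraic $G$-variety. Let $Z$ be the Zariski closure of the graph $\{(x,V_x):x\in\limitset\}$. Then $Z$ is $\Gamma$-invariant, hence invariant under the Zariski closure $G$ of $\Gamma$. Its projection to $\Fboundary$ is $G$-invariant and contains $\limitset$, so it is all of $\Fboundary$.

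I want to conclude that $Z$ is the graph of a $G$-invariant distribution $\mathcal D$. This amounts to showing that the $P$-orbit of $V_{P}$ in $\mathrm{Gr}_d(\g/\mathfrak p)$ is a point, i.e.\ that $V_{eP}$ is $P$-invariant. To obtain this I would use the contraction $\Ad(\exp(-\lambda(\gamma)))$ at attracting fixed points, together with the fact that $MA$ together with the $N$-direction is generated by limits of elements $\gamma^{k}\gamma'\gamma^{-k}$. \textbf{This step is the main obstacle}: Zariski closure alone only shows that $Z$ is a union of $G$-orbits, and one must rule out a positive-dimensional fiber. I would first try to show that $V_\xi$ is $\Ad(A)$-invariant and $\Ad(N)$-invariant at attracting fixed points $\xi$, using density of such points and continuity of $V$.

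\emph{Step 3: integrability and contradiction.} A $G$-invariant distribution on $G/P$ corresponds to an $\Ad(P)$-invariant subspace $\mathfrak v/\mathfrak p\subset\g/\mathfrak p$. Since $V_x\subset T_xS$ is tangent to $S$ at every point of $\limitset$ near $x$, I expect the bracket of $\mathfrak v$ to stay in $\mathfrak v$ after a Frobenius-type limiting argument along the dense set of limit points. Then $\mathfrak v$ is a subalgebra containing $\mathfrak p$, i.e.\ $\mathfrak v=\mathfrak q=\operatorname{Lie}(Q)$ for a proper parabolic $Q\supsetneq P$.

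The leaves of $\mathcal D$ are then the fibers of $\Fboundary\to G/Q$, and $\limitset$ is locally contained in single fibers. Consequently the image of $\limitset$ in $G/Q$ is discrete, hence finite. This is impossible: that image is the limit set of $\Gamma$ in $G/Q$, which is infinite, since it is $\Gamma$-invariant and Zariski dense in the positive-dimensional variety $G/Q$.
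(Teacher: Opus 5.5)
Your proposal has genuine gaps, the main one being the Step~2 obstacle you flag. The paper gives no proof of this lemma; it imports it from Edwards--Lee--Oh, so your argument has to stand on its own, and as written it does not.

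\textbf{Step 2.} The Zariski closure $Z$ of the graph is $G$-invariant, but its fiber over $eP$ is only a $P$-invariant Zariski closed subset of $\mathrm{Gr}_d(\g/\mathfrak p)$, and nothing forces it to be a point. Your proposed mechanism cannot supply $P$-invariance of $V_{eP}$. At an attracting fixed point $\xi=\gamma^+$, with $\gamma=am\in MA$ in adapted coordinates, equivariance only gives invariance of $V_\xi$ under the single linear map $\Ad(am)|_{\cnL}$ on $T_\xi\F\cong\cnL$. That says nothing about $\Ad(N)$. Moreover, $\Gamma$ is discrete, so a convergent sequence $\gamma^k\gamma'\gamma^{-k}$ is eventually constant, and such limits produce no new elements stabilizing $\xi$.

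\textbf{Step 1.} This step relies on a false general principle. For compact perfect sets, $x\mapsto\dim V_x$ is not lower semicontinuous: two crossing segments already fail at the crossing point. Continuity of $x\mapsto V_x$ is not automatic either. So the continuous equivariant section is not established.

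\textbf{Step 3.} There are two further problems here.
\begin{enumerate}
\item Integrability cannot come from $P$-invariance alone. The space $\g/\mathfrak p$ can carry non-integrable $\Ad(P)$-invariant subspaces: for $\mathrm{SL}_3(\R)$, take $\mathfrak p\oplus\g_{-\alpha}\oplus\g_{-\beta}$ with $\alpha,\beta$ the simple roots. Your Frobenius heuristic would at least need $V_x=T_xS$, not merely $V_x\subset T_xS$.
\item More seriously, an integrable $G$-invariant $\mathcal D$ with $V_x\subset\mathcal D_x$ on $\limitset$ does not put $\limitset$ locally inside single fibers of $\F\to G/Q$. A compact set can be tangent to a foliation at every one of its points without lying locally in leaves. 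For example, in $\R^2$ foliated by horizontal lines, take the graph over a Cantor set $C\subset\R$ of a $C^1$ function whose derivative vanishes on $C$ but which is not locally constant on $C$.
\end{enumerate}
So the ``finite image in $G/Q$'' contradiction is never reached.

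\textbf{A route that works.} A blow-up at an attracting fixed point avoids all of this; it upgrades local containment in $S$ to global containment in an algebraic set.
\begin{enumerate}
\item Take a loxodromic $\gamma\in\Gamma$ with $\gamma^+\in\limitset\cap\mathcal O$; such points are dense. Normalize so that $\gamma^\pm=e^\pm$ and $\gamma=am\in MA$ with $\log a\in\operatorname{int}\aL^+$.
\item Use the chart $Y\mapsto\vis^+(\exp Y)$ from $\cnL$ onto the big cell. Near $0$, write $S\subset\{f=0\}$ with $df(0)\neq0$.
\item For $y=\vis^+(\exp Y)\in\limitset$ one has $\gamma^ky\to e^+$, hence $f(\Ad((am)^k)Y)=0$ for all large $k$.
\item $\Ad(a^k)$ scales $\g_{-\alpha}$ by $e^{-k\alpha(\log a)}$, and $\Ad(m)$ preserves each root space inside a compact group. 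Pass to a subsequence with $m^{k_j}\to m_\infty$. Then $e^{k_jd_0}f\circ\Ad((am)^{k_j})$ converges locally uniformly to $P_0\circ\Ad(m_\infty)$.
\item Here $P_0$ is the part of the Taylor expansion of $f$ of least weighted degree $d_0$, with weight $\alpha(\log a)$ on $\g_{-\alpha}$. It is nonzero because $df(0)\neq0$, and only finitely many Taylor terms matter.
\item Hence $\limitset$ lies in a proper Zariski closed subset of $\F$, namely a proper algebraic subset of the big cell together with its complement.
\item This contradicts the fact that the Zariski closure of $\limitset$ is $G$-invariant and therefore all of $\F$.
\end{enumerate}
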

	
	\begin{proposition}[Nonconstant discrepancy criterion]
		\label{prop:non-lattice-criterion}
		Let $\psi\in\LieA^*-\{0\}$ and set ${{\mathsf K}^\psi}:=\psi\circ\ret$. If
		${{\mathsf K}^\psi}$ satisfies the nonconstant discrepancy condition, then the subset
		\[
		\psi(\lambda(\Gamma)) \subset \R
		\]
		generates a dense subgroup of $\R$.
	\end{proposition}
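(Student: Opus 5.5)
We argue by contraposition. Suppose the subgroup generated by \(\psi(\lambda(\Gamma))\) is not dense in \(\R\). Then it is contained in \(L=c\Z\) for some \(c\geq0\). By \cref{lem:Jordan-proj-in-terms-of-return-vector} and the remark following it, every periodic sum \({\mathsf K}^\psi_n(\xi)\) with \(\xi\in\Fix(\sigma^n)\) is \(\psi(\lambda(\gamma))\) for some cyclically reduced \(\gamma\in\gen^{(n)}\). Hence all periods lie in \(L\). Since \({\mathsf K}^\psi|_{\limitset}\) is H\"older with respect to \(d_\vartheta\), \cref{lem:Livsic-Schottky-shift} gives a continuous \(\rho:\limitset\to\R/L\) with \({\mathsf K}^\psi=\rho-\rho\circ\sigma \pmod L\) on \(\limitset\).

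The next step is to compute the scalar discrepancy on the limit set. Take \(x,x'\in\limitset\cap\Sball_{\xi_0}^-\), and put \(y_k=\xi_{[k]}^{-1}x\) and \(y'_k=\xi_{[k]}^{-1}x'\), which lie in \(\limitset\). Because \(x\notin\Sball_{\xi_0}\), the words \(\xi_{k-1}^{-1}\cdots\xi_0^{-1}\) followed by the coding of \(x\) are reduced, so \(\sigma^j y_m=y_{m-j}\). Telescoping then gives
\[
{\mathsf K}^\psi_m(y_m)-{\mathsf K}^\psi_m(y'_m)\equiv \rho(y_m)-\rho(y'_m)-\rho(x)+\rho(x') \pmod L .
\]
As \(m\to\infty\), both \(y_m\) and \(y'_m\) lie in the nested domain \(\xi_{m-1}^{-1}\cdots\xi_0^{-1}\Sball^-_{\xi_0}\)-type sets of diameter \(O(\tau^m)\). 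More concretely, both begin with the letters \(\xi_{m-1}^{-1}\cdots\xi_0^{-1}\), so \(d_{\mathcal F}(y_m,y'_m)\to0\). Uniform continuity of \(\rho\) on the compact set \(\limitset\) then gives \(\rho(y_m)-\rho(y'_m)\to0\) in \(\R/L\). Therefore
\[
\Delta^\psi_\xi(x,x')\equiv \rho(x')-\rho(x)\pmod L .
\]
Subtracting the identities for \(\zeta\) and \(\eta\), we get \(\Phi^\psi_{\zeta,\eta}(x,x')\in L\) for all \(x,x'\in\limitset\cap\Sball_{\zeta_0}^-\cap\Sball_{\eta_0}^-\).

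Now fix \(x_0'\) and use continuity of \(\Phi^\psi_{\zeta,\eta}(\cdot,x_0')\). Since \(L\) is discrete (for \(c=0\) it is \(\{0\}\)), this function is locally constant on \(\limitset\) near \(x_0\). So there is a neighborhood \(\mathcal O\) of \(x_0\) and a value \(a\in L\) with \(\limitset\cap\mathcal O\subset\{x\in\mathcal O:\Phi^\psi_{\zeta,\eta}(x,x_0')=a\}\). By hypothesis, the smooth function \(x\mapsto\Phi^\psi_{\zeta,\eta}(x,x_0')\) has nonzero differential at \(x_0\) in direction \(v\). Shrinking \(\mathcal O\), the implicit function theorem makes this level set a smooth hypersurface of \(\Fboundary\). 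Thus \(\limitset\cap\mathcal O\) lies in a codimension-one submanifold, contradicting \cref{lem:not-in-submanifold}.

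The main obstacle is the second step, establishing \(\Delta^\psi_\xi\equiv\rho(x')-\rho(x)\) modulo \(L\). It needs care on two points:
\begin{itemize}
\item that the backward iterates \(\xi_{[k]}^{-1}x\) lie in \(\limitset\cap\Sball^{(2)}\) with the correct first letter, so that the coboundary identity (valid only on \(\limitset\)) applies and \(\sigma\) acts as expected;
\item that limits taken in \(\R/L\) are compatible with the real-valued series defining \(\Delta_\xi\). This is straightforward because the real partial sums converge.
\end{itemize}
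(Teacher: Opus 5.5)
Your proof is correct and follows essentially the same route as the paper. You use the Liv\v{s}ic theorem to write $\mathsf K^\psi$ as a coboundary modulo $L$ and telescope along the backward branch to get $\Delta^\psi_\xi(x,x')\equiv\rho(x')-\rho(x) \pmod L$, hence $\Phi^\psi_{\zeta,\eta}\in L$ on $\limitset$; local constancy and the implicit function theorem then contradict \cref{lem:not-in-submanifold}. Your explicit check that $\sigma^j y_m=y_{m-j}$ (using $x\notin\Sball_{\xi_0}$) fills in a step the paper leaves implicit.
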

	
	\begin{proof}
		By \cref{lem:Jordan-proj-in-terms-of-return-vector}, together with the
		conjugacy invariance of the Jordan projection, $\{\psi(\lambda(\gamma)):\gamma\in\Gamma\}$ is precisely the set of periods
		\[
		\{({{\mathsf K}^\psi})_n(x): n \geq 1, x \in \Fix(\sigma^n)\}
		\]
		of the scalar return map
		\[
		{{\mathsf K}^\psi}:=\psi\circ\ret.
		\]
		
		Suppose, for the sake of contradiction, that the set of periods of ${{\mathsf K}^\psi}$ generates a discrete subgroup
		$L=c\mathbb Z$
		for some \(c\geq 0\). Then
		every period of \({{\mathsf K}^\psi}\) lies in \(L\). Since
		$\ret|_{\limitset}$
		is H\"older continuous with respect to the symbolic metric \(d_\vartheta\),
		so is \({{\mathsf K}^\psi}|_{\limitset}\). Hence, \cref{lem:Livsic-Schottky-shift} gives a continuous function
		\[
		\rho:\limitset\to \R/L
		\]
		such that
		\[
		{{\mathsf K}^\psi}=\rho-\rho\circ\sigma
		\qquad \pmod{L}.
		\]
		Consequently, for every \(m\geq 1\),
		\[
		({{\mathsf K}^\psi})_m=\rho-\rho\circ\sigma^m
		\qquad \pmod{L}.
		\]
		
		Let \(\zeta\) be an infinite reduced word, and let \(x,x'\in \limitset\cap\Sball_{\zeta_0}^-\).
		Then
		\[
		\begin{aligned}
			\psi(\Delta_\zeta(x,x'))
			&=
			\lim_{m\to\infty}
			\left(
			({{\mathsf K}^\psi})_m(\zeta_{[m]}^{-1}x)
			-
			({{\mathsf K}^\psi})_m(\zeta_{[m]}^{-1}x')
			\right) \\
			&=
			\lim_{m\to\infty}
			\left(
			\rho(\zeta_{[m]}^{-1}x)-\rho(x)
			-
			\rho(\zeta_{[m]}^{-1}x')+\rho(x')
			\right)
			\qquad \pmod{L} .
		\end{aligned}
		\]
		
		The inverse branches are uniformly contracting, so the two points
		\[
		\zeta_{[m]}^{-1}x
		\quad\text{and}\quad
		\zeta_{[m]}^{-1}x'
		\]
		converge to each other as \(m\to\infty\). By (uniform) continuity of \(\rho\), their
		images in \(\R/L\) also converge to each other. Therefore
		\[
		\psi(\Delta_\zeta(x,x'))
		=
		-\rho(x)+\rho(x')
		\qquad \pmod{L}.
		\]
		The same formula holds for any other infinite reduced word \(\eta\). Hence,
		on the common domain $\limitset \cap \bigl(\Sball_{\zeta_0}^- \cap \Sball_{\eta_0}^-\bigr)$, we have
		\[
		\Phi_{\zeta,\eta}^{\psi}(x,x')
		=
		0
		\qquad \pmod{L},
		\]
		or equivalently,
		\[
		\Phi_{\zeta,\eta}^{\psi}(x,x')\in L.
		\]

		Now, choose \(\zeta,\eta,x_0,x_0'\), and
		\(v\in T_{x_0}\Fboundary\), from the nonconstant discrepancy condition of ${{\mathsf K}^\psi}$. Set
		\[
		F:=\Phi_{\zeta,\eta}^{\psi}(\cdot,x_0').
		\]
		The preceding paragraph shows that
		\[
		F(\limitset\cap U)\subset L
		\]
		for some neighborhood \(U\) of \(x_0\) contained in the  domain of
		\(F\). Since \(L\) is a discrete
		subgroup of \(\R\) and \(F\) is continuous, after shrinking \(U\) to a
		smaller neighborhood of \(x_0\) if necessary, we have
		\[
		F(x)=F(x_0)
		\qquad
		\text{for all }x\in\limitset\cap U.
		\]
		Thus,
		\[
		\limitset\cap U
		\subset
		F^{-1}(F(x_0)).
		\]
		
		On the other hand, the nonconstant discrepancy condition of ${{\mathsf K}^\psi}$ gives
		\[
		DF(x_0)(v)
		=
		D_x\Phi_{\zeta,\eta}^{\psi}(x_0,x_0')(v)
		\neq 0.
		\]
		Therefore,
		the differential \(DF(x_0)\) is nonzero. Hence, by the implicit function
		theorem, after shrinking \(U\) if necessary, the level set
		\[
		F^{-1}(F(x_0))\cap U
		\]
		is a smooth hypersurface in \(\Fboundary\). In particular, it is a smooth
		submanifold of lower dimension. This contradicts
		\cref{elo}. 
		Hence, the set of periods of ${{\mathsf K}^\psi}$ are not contained in any discrete subgroup of $\R$. Since every proper closed subgroup of $\R$ is discrete, the subgroup generated by the periods of ${{\mathsf K}^\psi}$ is dense in $\R$.
	\end{proof}

	\subsection{A rational-map analogue}
	\label{subsec:rational-map-analogue}
	We briefly explain that the non-lattice criterion above is not specific to
	the linear Lie group setting. For instance, the same argument applies to expanding Markov
	systems, provided one has an appropriate nonconstant discrepancy condition and a
	nondegeneracy property of the limit set.
	We refer to \cite{OW17} for background for the discussion below.
	Let
	\[
	f:\widehat{\mathbb C}\to\widehat{\mathbb C}
	\]
	be a hyperbolic rational map of degree at least two, and let \(J=J(f)\) be its
	Julia set. Recall that hyperbolicity means that \(f\) is expanding on \(J\).
	After conjugating, we may assume that \(\infty\notin J\).
	
	Let $P_1,\ldots,P_{k_0}$
	be a sufficiently small Markov partition for \(J\). Thus \(J=\bigcup_j P_j\),
	the interiors of the \(P_j\)'s are disjoint relative to \(J\), and each \(f(P_i)\)
	is a union of partition elements. We choose open neighborhoods \(U_j\supset P_j\)
	so that, whenever \(f(P_i)\supset P_j\), there is a holomorphic inverse branch $g_{ij}:U_j\to U_i$
	of \(f\). Thus the Markov
	pieces \(P_j\), together with their neighborhoods \(U_j\), play the role of
	the Schottky domains, and the maps \(g_{ij}\) play the role of Schottky
	inverse branches (cf. \cite[\S2.2]{OW17}).
	
	The role of the Busemann return map \(\ret\) is played by the distortion
	function
	\[
	\tau(z):=\log |f'(z)|.
	\]
	For \(n\geq1\), set
	\[
	\tau_n(z):=\sum_{j=0}^{n-1}\tau(f^j z).
	\]
	If \(p\in J\) is a periodic point with \(f^n(p)=p\), then
	\[
	\tau_n(p)=\log |(f^n)'(p)|.
	\]
	Thus the periods of \(\tau\) are precisely the logarithms of the moduli of
	periodic multipliers.

	Let $\xi=(\ldots,\xi_{-2},\xi_{-1},\xi_0)$
	be a backward admissible itinerary. Following \cite[\S3.1]{OW17}, denote by $ g_\xi^n:U_{\xi_0}\to U_{\xi_{-n}}$
	the corresponding inverse branch of \(f^n\).
	For \(x,y\in U_{\xi_0}\), define
	\[
	\tau_\infty(\xi,x,y)
	:=
	\sum_{n=1}^\infty
	\bigl(
	\tau(g_\xi^n x)-\tau(g_\xi^n y)
	\bigr).
	\]
	This is the rational-map analogue of the discrepancy \(\Delta_\eta\). The
	series converges because the inverse branches are uniformly contracting and
	\(\tau\) is Lipschitz on the chosen neighborhoods.
	The series converges in \(C^\infty\) on compact subsets of \(U_{\xi_0}\times
	U_{\xi_0}\), because the inverse branches \(g_\xi^m\) are uniformly contracting
	and \(\tau\) is smooth on the chosen neighborhoods.
	
	For two backward itineraries \(\xi,\widehat \xi\) with the same
	terminal symbol \(\xi_0=\widehat\xi_0=j\), 
	set
	\[
	\Phi_{\xi,\widehat\xi}(x,y)
	:=
	\tau_\infty(\xi,x,y)-\tau_\infty(\widehat\xi,x,y),
	\qquad x,y\in U_j .
	\]

	The non-local integrability condition of Oh--Winter is exactly the analogue
	of the nonconstant discrepancy condition used above: there exist \(j\), points
	\(x_0,x_1\in P_j\), and backward admissible itineraries \(\xi,\widehat\xi\)
	ending at \(j\), such that
	\[
	D_x\Phi_{\xi,\widehat\xi}(x_1,x_0)\neq0.
	\]
	Oh--Winter prove that this condition holds for \(\tau=\log |f'|\) whenever
	\(f\) is not conjugate to \(z\mapsto z^{\pm d}\) \cite[Theorem 3.4]{OW17}.
	In particular, if \(J(f)\) is not contained in a circle, then \(f\) is not
	conjugate to a monomial, and hence the condition holds.

	We also need the following nondegeneracy property of Julia sets proved by Bergweiler--Eremenko \cite[Theorem 2]{Bergweiler_Eremenko}: if \(J(f)\)
	is not contained in a circle, then it is not locally contained in any
	smooth curve.
	
	Using the above two ingredients, we can derive the following proposition.
	
	\begin{proposition}
		\label{prop:rational-density}
		Let \(f:\widehat{\mathbb C}\to\widehat{\mathbb C}\) be a hyperbolic rational
		map whose Julia set \(J(f)\) is not contained in a circle. Then the subgroup
		of \(\mathbb R\) generated by
		\[
		\left\{
		\log |(f^n)'(p)|:
		f^n(p)=p,\ p\in J(f),\ n\geq1
		\right\}
		\]
		is dense in \(\mathbb R\). Equivalently, the moduli of the multipliers of
		repelling periodic points generate a dense subgroup of \(\mathbb R_{>0}\)
		under multiplication.
	\end{proposition}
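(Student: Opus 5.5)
I will run the same argument as in the proof of \cref{prop:non-lattice-criterion}, with the Markov coding of $J$ in place of the Schottky coding and $\tau=\log|f'|$ in place of ${{\mathsf K}^\psi}$.

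\textbf{Step 1: assume a lattice and apply Liv\v{s}ic.} Suppose, for contradiction, that the periods $\tau_n(p)=\log|(f^n)'(p)|$ generate a discrete subgroup $L=c\Z$ with $c\geq 0$. The Markov partition $P_1,\ldots,P_{k_0}$ conjugates $(J,f)$ to a one-sided subshift of finite type, up to the usual boundary identifications. This conjugacy does not affect periodic-orbit sums of the lifted function. Moreover, $\tau$ is Lipschitz on the neighborhoods $U_j$, so its lift is H\"older for a symbolic metric. By the Liv\v{s}ic theorem (\cref{lem:Livsic-Schottky-shift}, i.e.\ \cite[Proposition 5.2]{ParryPollicott1990}), there is a continuous $\rho:J\to\R/L$ with
\[
\tau=\rho-\rho\circ f \pmod L
\quad\text{on } J .
\]

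\textbf{Step 2: telescope along a backward branch.} Fix a backward itinerary $\xi$ ending at $j$, and take $x,y\in P_j\subset J$. The points $g^n_\xi x$ lie in $J$ and satisfy $f^n(g^n_\xi x)=x$, so
\[
\sum_{k=1}^n\tau(g^k_\xi x)=\tau_n(g^n_\xi x)=\rho(g^n_\xi x)-\rho(x)\pmod L .
\]
The branches $g^n_\xi$ are uniformly contracting, so $g^n_\xi x$ and $g^n_\xi y$ converge to each other. By uniform continuity of $\rho$ on the compact set $J$,
\[
\tau_\infty(\xi,x,y)=\rho(y)-\rho(x)\pmod L .
\]
The right-hand side does not depend on $\xi$. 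Therefore $\Phi_{\xi,\widehat\xi}(x,y)\in L$ for all $x,y\in P_j$.

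\textbf{Step 3: contradiction with Bergweiler--Eremenko.} Since $J(f)$ is not contained in a circle, \cite[Theorem 3.4]{OW17} gives $j$, points $x_0,x_1\in P_j$ and itineraries $\xi,\widehat\xi$ with $D_x\Phi_{\xi,\widehat\xi}(x_1,x_0)\neq0$. Set $F:=\Phi_{\xi,\widehat\xi}(\cdot,x_0)$, which is smooth on $U_j$. By Step 2, $F(J\cap P_j)\subset L$. Because $L$ is discrete and $F$ is continuous, $F$ is constant on $J\cap V$ for some small neighborhood $V$ of $x_1$.

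One point needs care here. We want $J\cap V\subset P_j$ near $x_1$, but $x_1$ might lie on the boundary of $P_j$. To avoid this, use that the nonvanishing derivative is an open condition: perturb $x_1$ to a nearby point of $J$ in the relative interior of $P_j$. This is possible since the partition is by closures of relatively open sets and $x_1\in P_j$. Alternatively, work directly with $J\cap P_j$, which is itself a relatively open subset of $J$ near interior points.

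Since $DF(x_1)\neq0$ and $\widehat{\mathbb C}$ has real dimension $2$, the implicit function theorem shows that $F^{-1}(F(x_1))\cap V$ is a smooth curve after shrinking $V$. So $J$ is locally contained in a smooth curve near $x_1$. This contradicts \cite[Theorem 2]{Bergweiler_Eremenko}. Hence the group is not discrete, and since every proper closed subgroup of $\R$ is discrete, it is dense. Applying $\exp$ gives the multiplicative reformulation.

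\textbf{Main obstacle.} The main obstacle is the bookkeeping in Step 1. Liv\v{s}ic is applied on the symbolic space, while we need $\rho$ to be a genuine continuous function on $J$ that is compatible with the branches $g^n_\xi$. If descending $\rho$ to $J$ is problematic at points with several codings, I would instead run Step 2 entirely on the symbolic side, using cylinders inside $P_j$. The contradiction only needs the level-set inclusion on an open piece of $J$.
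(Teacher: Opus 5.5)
Your proposal is correct and follows the paper's own proof. The steps match: Liv\v{s}ic modulo $L=c\Z$ on the Markov coding, then telescoping along backward branches $g^n_\xi$ to show that every two-branch discrepancy $\Phi_{\xi,\widehat\xi}$ takes values in $L$, and finally the Oh--Winter non-local integrability condition plus the implicit function theorem, contradicting Bergweiler--Eremenko. Your extra care about boundary points of $P_j$ and about working with the symbolic transfer function fills in details the paper leaves implicit, and both of your fixes are sound.
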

	
	\begin{proof}
		The proof is the same as the proof of \cref{prop:non-lattice-criterion}, with
		the following replacements:
		\[
		\begin{array}{c|c}
			\text{Schottky setting} & \text{rational-map setting} \\
			\hline
			\Lambda_\Gamma & J(f) \\
			\Sball_\gamma &  U_j \\
			\sigma:\Lambda_\Gamma\to\Lambda_\Gamma & f:J(f)\to J(f) \\
			\eta_{[m]}^{-1} & g_\xi^m \\
			\ret^\psi & \tau=\log |f'| \\
			(\ret^\psi)_n & \tau_n=\log |(f^n)'| \\
			\Delta_\eta & \tau_\infty(\xi,\cdot,\cdot) \\
			\Phi_{\eta,\theta} & \Phi_{\xi,\widehat\xi}.
		\end{array}
		\]
		If the logarithmic multiplier moduli were contained in a discrete subgroup
		\(c\mathbb Z\subset\mathbb R\), then the Liv\v{s}ic theorem for the expanding
		Markov map \(f:J(f)\to J(f)\) would imply that \(\tau\) is cohomologous to
		zero modulo \(c\mathbb Z\). The same limiting argument as in
		\cref{prop:non-lattice-criterion} would then force every two-branch
		discrepancy \(\Phi_{\xi,\widehat\xi}\) to be locally constant on \(J(f)\).
		This contradicts the non-local integrability condition together with the fact
		that \(J(f)\) is not locally contained in a smooth curve.
	\end{proof}
	
	\begin{remark}
		The conclusion of \cref{prop:rational-density} follows from the prime
		orbit theorem of Oh--Winter \cite{OW17}. Indeed, their theorem gives the
		asymptotic
		\[
		\#\{\widehat x:\ |\lambda(\widehat x)|<t\}
		=
		\operatorname{Li}(t^\delta)+O(t^{\delta-\epsilon})
		\]
		for hyperbolic rational maps not conjugate to monomials, where $\lambda(\widehat x)$ denotes the multiplier of a periodic orbit $\widehat x$ and $\delta$ is the Hausdorff dimension of $J(f)$. From
		that asymptotic one can also deduce the density statement above: if all
		\(\log|\lambda(\widehat x)|\) lay in \(c\mathbb Z\), then the corresponding
		counting function would be constant on the intervals
		\((e^{kc},e^{(k+1)c}]\), contradicting the asymptotic formula. The point of
		\cref{prop:rational-density} is that this qualitative density statement follows
		directly from the nonconstant discrepancy condition and the nondegeneracy of the Julia set,
		without using the full analytic machinery of the prime orbit theorem.

		Unlike the group case, however, there is no automatic reduction from a
		general rational map to a hyperbolic rational map analogous to passing from a
		Zariski dense group to a Zariski dense Schottky subgroup. Moreover, the naive
		statement for all rational maps whose Julia set is not contained in a circle is
		false because of Latt\`es maps: their Julia set is the whole sphere, while their
		periodic multiplier moduli lie in a discrete multiplicative subgroup. On the other hand, a recent theorem of Ji--Xie--Zhang \cite{JXZ26} proves the corresponding density statement for arbitrary rational maps whose Julia set is not contained in a circle, with Latt\`es maps as the only exception.
		
	\end{remark}

	\section{Proof of the nonconstant discrepancy condition}
	\label{sec:proof-non-lattice}
	
	In this section, we prove the following proposition.
	
	\begin{proposition}\label{prop:main-non-lattice}
		For every $\psi\in\aL^*-\{0\}$, the scalar return function \({{\mathsf K}^\psi}:=\psi\circ \ret\) satisfies the nonconstant discrepancy condition.
	\end{proposition}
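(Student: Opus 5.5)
We will argue by contradiction, following the outline sketched in the introduction. Suppose that for some $\psi\in\aL^*-\{0\}$ and all admissible $\zeta,\eta$, $x_0,x_0'\in\limitset$ and $v$, we have $D_x\Phi^\psi_{\zeta,\eta}(x_0,x_0')(v)=0$.

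\textbf{Step 1: a closed form for $\Delta_\xi$.} Using additivity and $G$-equivariance of $\beta$, exactly as in the proof of \cref{lem:Jordan-proj-in-terms-of-return-vector}, the partial sum is
\[
\ret_m(\xi_{[m]}^{-1}x)-\ret_m(\xi_{[m]}^{-1}x')=\beta_{x}(\xi_{[m]},e)-\beta_{x'}(\xi_{[m]},e).
\]
Letting $m\to\infty$, with $\xi_{[m]}$ converging to the backward endpoint $\xi^-$ determined by the reversed word, we expect to obtain
\[
\Delta_\xi(x,x')=\mathcal G(\xi^-,x)-\mathcal G(\xi^-,x')
\]
for a smooth ``Gromov product'' type function $\mathcal G$ on $\F^{(2)}$. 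Concretely, write $(\xi^-,x)=(g w_0P,gP)$ and set $\mathcal G$ equal to the $A$-component of $g$ modulo the natural normalization. Then
\[
\Phi_{\zeta,\eta}(x,x')=\bigl[\mathcal G(\zeta^-,x)-\mathcal G(\eta^-,x)\bigr]-\bigl[\mathcal G(\zeta^-,x')-\mathcal G(\eta^-,x')\bigr].
\]
This is a cross-ratio-type quantity. Geometrically, it is the $A$-component of the rectangular holonomy obtained by moving from $x$ along $\check N$-type (unstable) horospheres toward $\zeta^-$, then to $\eta^-$, and back. The backward endpoints $\zeta^-,\eta^-$ range over a set dense in the limit set of $\Gamma$ (which is $\limitset$ itself, by symmetry of $\mathcal A$). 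So, by continuity and smoothness in $\zeta^-,\eta^-$, the assumption says
\[
D_x\,\psi\bigl(\mathcal G(y,x)-\mathcal G(y',x)\bigr)=0
\]
for all $y,y'\in\limitset$ near suitable points and all $x\in\limitset$ near $x_0$, where $x$ is in general position with $y,y'$.

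\textbf{Step 2: local normalization.} Fix $y'$ and translate by $g\in G$ so that $y'=w_0P$ and $x$ lies near $P$. Parametrize $x=nP$ with $n\in\check N$ small, so $\check N$ is a chart at $P$, and write $y=u w_0P$ with $u\in N$ near $e$. Then $\psi(\mathcal G(y,x)-\mathcal G(y',x))$ becomes $\psi$ of the $A$-component of the Bruhat decomposition of $u^{-1}n$, i.e. of a product $\exp(-Y)\exp(X)$ with $X\in\cnL$, $Y\in\nL$. Differentiating in $X$ at $X=0$ and expanding in $Y$ to second order gives
\[
\psi\bigl(\pi_\aL[Y,X]\bigr)+O(|Y|^2|X|).
\]
By the Killing form duality,
\[
\pi_\aL[\g_\alpha,\g_{-\alpha}]=\R H_\alpha \quad\text{for }\alpha\in\Pi.
\]
Since $\{H_\alpha\}$ is a basis of $\aL$ and $\psi\ne0$, there is a root $\alpha$, simple or more generally positive, with $\psi(H_\alpha)\ne0$.

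\textbf{Step 3: contradiction with \cref{elo}.} The derivative vanishing identically means that for every $y=uw_0P\in\limitset$ near $w_0P$, the covector $X\mapsto\psi(\pi_\aL\text{-part})$, which is nonzero at first order in $Y=\log u$ along $\g_\alpha$-directions, annihilates $T_x$ directions. That alone says nothing about $\limitset$. The intended use is to swap roles: fixing $x$ and varying $y$, the function $y\mapsto D_x\psi(\mathcal G(y,x)-\mathcal G(y',x))(v)$ is smooth, vanishes at $y=y'$, and has differential $Y\mapsto\psi(\pi_\aL[Y,v])$. Choosing $v\in\g_{-\alpha}$ with $\psi(\pi_\aL[\g_\alpha,v])\ne0$, this differential is nonzero. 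So its zero set near $y'$ is a smooth hypersurface, and by the assumption it contains $\limitset$ near $y'$, contradicting \cref{elo}.

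The main obstacle is Step 1 together with the justification that one may take $v$ arbitrary. The nonconstant discrepancy condition allows any $v\in T_{x_0}\F$, so that part is fine. The real difficulty is making sure the roles of $x$ and $\xi^-$ are correctly placed: nondegeneracy must be used in the $y$-variable, which runs over backward endpoints. Hence I must verify carefully that the map $\zeta\mapsto\zeta^-$ hits an open piece of $\limitset$ and that $\Phi$ depends smoothly on it, so that the Lie bracket computation of Step 2 gives a nonzero first-order term in $y$.
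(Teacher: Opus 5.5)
Your Steps 2 and 3 are the paper's argument. The Bruhat quantity in Step 2 is the rectangular holonomy of \cref{def:rectangle-holonomy}: unwinding the definition, $\mathcal H(h,n)=-\log a$ where $n^{-1}h=\check n' m a n''$ in $\check N MAN$. Step 3 is \cref{lem:nonzero-derivative}: the function of the backward endpoint vanishes at $y_\omega$ and has differential $Y\mapsto\pm\psi(\pi_\aL[Y,v])$. This differential is nonzero for suitable $Y\in\g_\alpha$, $v\in\g_{-\alpha}$ with $\psi(H_\alpha)\neq0$, and \cref{elo} forbids $\limitset$ from lying in its zero set. Two of your choices differ from the paper only cosmetically: you fix $v$ and vary $y$, whereas the paper fixes $Z$ and lets $W=(D\Psi_n)_e^{-1}Z$ depend on $n$; and you argue by contradiction rather than directly. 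One correction: since $\xi_{[m]}o\to\xi^+$, the relevant point is the one coded by $\xi$ itself, not by a ``reversed word'' (in the paper, $n_\xi^-=\xi$). So the $y$-parameter is the coding map on words with a fixed first letter $a$, which is onto $\limitset\cap\Sball_a$. Your closing worry and the appeal to density and continuity are therefore unnecessary.

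The genuine gap is Step 1. You leave it as ``we expect'', but it is the non-formal part of the proof. Your partial-sum identity $\ret_m(\xi_{[m]}^{-1}x)-\ret_m(\xi_{[m]}^{-1}x')=\beta_x(\xi_{[m]},e)-\beta_{x'}(\xi_{[m]},e)$ is correct. However, $\mathcal G$ as you define it is not well defined: a frame with prescribed endpoints is determined only up to right multiplication by $MA$, which shifts its $A$-component. Passing to the limit through Cartan decompositions would require proving that $\xi_{[m]}=k_ma_m\ell_m$ satisfies $\alpha(\log a_m)\to\infty$ for all $\alpha\in\Pi$ and $k_mP\to\xi^+$. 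You would also have to check how $\mathcal G$ transforms under $G$ to justify the normalization in Step 2.

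The paper avoids all of this. With $\gamma=\xi_{[m]}^{-1}$ and first letter $\gamma_0=\xi_{m-1}^{-1}$, it writes $\ret_m(\gamma z)=\beta_{\gamma z}(e,\gamma)$. It then compares the frame $p_z=\gamma^{-1}g_{\gamma_0}h_{\gamma_0}(\gamma z)$, which has endpoints $(z,\gamma^{-1}y_{\gamma_0})$, with $h(z)$ via \eqref{eq:busemann-A-coordinate}. This gives an exact finite identity with correction terms $\kappa$ (\cref{lem:finite-holonomy}). As $m\to\infty$, $\gamma^{-1}y_{\gamma_0}\to\xi^+$ and $\kappa(\gamma x)-\kappa(\gamma x_\omega)\to0$. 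Hence $\Phi_{\zeta,\eta}(x,x_\omega)=\mathcal H(h(x),n_\zeta)-\mathcal H(h(x),n_\eta)$ holds directly in the normalized frame $(x_\omega,y_\omega)=(e^+,e^-)$ (\cref{lem:two-branch-holonomy}). The choice $\eta=\omega^{-1}\omega^{-1}\cdots$ then kills the second term.
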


	The proof is geometric. We first define a \emph{higher-rank} rectangular holonomy as an $A$-displacement obtained by alternating $N$- and $\check N$-moves in the Schottky domains of the Furstenberg boundary, and establish a derivative identity. This is motivated by rank-one arguments in other contexts \cite{SW21}.
	We then relate the $A$-displacement with the two-branch discrepancy for infinite words and prove, by a Lie bracket computation between root spaces, that every nonzero scalar projection has a nonzero derivative on the limit set, which is the nonconstant discrepancy condition; see \cref{nli2}.

	For $\gamma \in \gen$, write
	\[
	x_\gamma:=\gamma\gamma\cdots\in {\mathbf b}_\gamma,
	\qquad
	y_\gamma:=\gamma^{-1}\gamma^{-1}\cdots\in {\mathbf b}_{\gamma^{-1}}.
	\]
	Then, choose a corresponding frame $g_\gamma$, i.e., any element $g_\gamma \in G$ with forward and backward endpoints
	\[
	g_\gamma^+=x_\gamma,
	\qquad
	g_\gamma^-=y_\gamma.
	\]
	Fix a distinguished letter $\omega \in \gen$ throughout this section. We call $g_\omega$ the base frame. By $G$-equivariance of the entire setting, we may work in coordinates in which
	\[
	x_\omega=e^+,
	\qquad
	y_\omega=e^-,
	\]
	and we may assume that our choice of base frame is $g_\omega = e \in G$.

	Define the smooth visual charts
	\[
	\vis^+:\check N\to\{\zeta\in\F:(\zeta,e^-)\in\F^{(2)}\},
	\qquad h\mapsto h^+,
	\]
	and
	\[
	\vis^-:N\to\{\eta\in\F:(e^+,\eta)\in\F^{(2)}\},
	\qquad n\mapsto n^-.
	\]
	For $\gamma \in \gen$, recall the notation ${\mathbf b}_\gamma^-:={\mathbf b}^{(1)}- {\mathbf b}_\gamma$. The Schottky separation condition implies
	\[
	{\mathbf b}_\gamma\times {\mathbf b}_\gamma^-\subset \F^{(2)}.
	\]
	Thus, we may set
	\[
	\check N_\gamma:=(\vis^+ \circ g_\gamma)^{-1}({\mathbf b}_\gamma),
	\qquad
	N_\gamma:=(\vis^- \circ g_\gamma)^{-1}({\mathbf b}_\gamma^-).
	\]
	Consequently,
	\[
	\vis^+_\gamma := (\vis^+ \circ g_\gamma)|_{\check N_\gamma} \quad \text{ and } \quad  \vis^-_\gamma := (\vis^- \circ g_\gamma)|_{N_\gamma}
	\]
	are diffeomorphisms onto $\Sball_\gamma$ and $\Sball_\gamma^-$ respectively.
	It is convenient to define
	\[
	h_\gamma :=(\vis^+_\gamma)^{-1}, \qquad h := h_\omega.
	\]
	
	Denote the $A$-projection by $\pi_A:MA\to A$; its differential at $e$ is $\pi_\mathfrak{a}$.
	
	\begin{figure}
		\centering
		\includegraphics{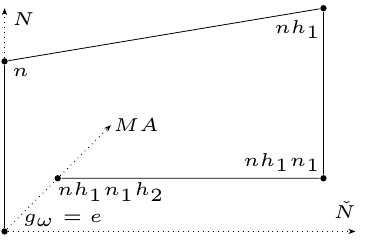}
		\caption{}
		\label{fig:RectangularHolonomy}
	\end{figure}
	
	\begin{definition}[Rectangular holonomy]\label{def:rectangle-holonomy}
		For $h\in\check N_\omega$ and $n\in N_\omega$, set
		\[
		x:=h^+,
		\qquad
		y:=n^-.
		\]
		Choose the unique elements
		\[
		h_1\in\check N,
		\qquad
		n_1\in N,
		\qquad
		h_2\in\check N
		\]
		such that
		\[
		(nh_1)^+=x,
		\qquad
		(nh_1n_1)^-=y_\omega,
		\qquad
		(nh_1n_1h_2)^+=x_\omega.
		\]
		Define
		\[
		{\mathcal H}(h,n):=\log \pi_A  (nh_1n_1h_2)\in\aL.
		\]
		See \cref{fig:RectangularHolonomy}.
	\end{definition}
	
	\begin{remark}
		In the above definition, recalling that right translation by $N$ preserves the positive endpoint, and right translation by $\check N$ preserves the negative endpoint, we also have
		\[
		(nh_1)^-=y,
		\qquad
		(nh_1n_1)^+=x,
		\qquad
		(nh_1n_1h_2)^-=y_\omega.
		\]
		Consequently, $(nh_1n_1h_2)^\pm = e^\pm$ and indeed $nh_1n_1h_2 \in MA$.
	\end{remark}
	
	We shall compare the Busemann return map with this rectangular holonomy. The comparison contains an endpoint correction. This correction is independent of the backward branch and hence cancels out in the two-branch discrepancy.
	
	For $\gamma \in \gen$, the map on $\Sball_\gamma$ given by
	\[
	z \mapsto g_\gamma h_\gamma(z)
	\]
	is a smooth lift of $\Sball_\gamma$ into $G$ with fixed negative endpoint $y_\gamma$:
	\[
	(g_\gamma h_\gamma(z))^+=z,
	\qquad
	(g_\gamma h_\gamma(z))^-=y_\gamma.
	\]
	Define the Busemann height of this lift $\kappa: \Sball^{(1)} \to \LieA$ piecewise by
	\[
	\kappa(z):=\beta_z(e,g_\gamma h_\gamma(z)),
	\qquad z\in \Sball_\gamma, \quad \gamma \in \gen.
	\]
	Since the first-level Schottky domains are disjoint, $\kappa$ is a smooth map.
	
	We shall use the following elementary relation between the Busemann function and $A$-coordinates. If $g,h\in G$ have the same positive endpoint $\xi$, then $g^{-1}h\in P=MAN$, and
	\begin{equation}\label{eq:busemann-A-coordinate}
		\log\pi_A(g^{-1}h)=\beta_\xi(g,h).
	\end{equation}
	This follows directly from the definition of the Iwasawa cocycle; the equality is unaffected by right translation of $g$ or $h$ by elements of $M$. 
	
	\begin{lemma}[Finite holonomy comparison]\label{lem:finite-holonomy}
		Let $m\ge1$ and let
		\[
		\gamma=\gamma_0\gamma_1\cdots\gamma_{m-1}\in\cA^{(m)}
		\]
		be such that $\gamma\omega$ is reduced. Let $n_\gamma\in N_\omega$ be determined by
		\[
		n_\gamma^-:=\gamma^{-1}y_{\gamma_0}.
		\]
		Then, for $x\in {\mathbf b}_\omega$,
		\begin{equation}\label{eq:finite-holonomy}
			\ret_m(\gamma x)-\ret_m(\gamma x_\omega)
			={\mathcal H}(h(x),n_\gamma)-\kappa(x)+\kappa(x_\omega)
			+\kappa(\gamma x)-\kappa(\gamma x_\omega).
		\end{equation}
	\end{lemma}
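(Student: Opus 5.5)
The plan is to rewrite both sides of \eqref{eq:finite-holonomy} as Busemann functions at the single point $x$ and then compare them through a chain of lifts that share the positive endpoint $x$.

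\emph{Step 1 (telescoping the left-hand side).} For $z\in\{x,x_\omega\}\subset\Sball_\omega$, reducedness of $\gamma\omega$ gives $\sigma^j(\gamma z)=(\gamma_0\cdots\gamma_{j-1})^{-1}\gamma z$, with the $j$th letter removed equal to $\gamma_j$. The cocycle computation in the proof of \cref{lem:Jordan-proj-in-terms-of-return-vector} then yields $\ret_m(\gamma z)=\beta_{\gamma z}(e,\gamma)$. By $G$-equivariance this equals $\beta_z(\gamma^{-1},e)$. Hence the left-hand side is $\beta_x(\gamma^{-1},e)-\beta_{x_\omega}(\gamma^{-1},e)$.

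\emph{Step 2 (chain of lifts through $x$).} Put $L(x):=g_{\gamma_0}h_{\gamma_0}(\gamma x)$. Its endpoints are $(\gamma x,y_{\gamma_0})$, so $\gamma^{-1}L(x)$ has endpoints $(x,\gamma^{-1}y_{\gamma_0})=(x,n_\gamma^-)$. Let $h_1,n_1,h_2$ be the elements of \cref{def:rectangle-holonomy} for the pair $(h(x),n_\gamma)$. Then $n_\gamma h_1$, $n_\gamma h_1n_1$ and $h(x)=g_\omega h(x)$ all have positive endpoint $x$. By additivity,
\[
\beta_x(\gamma^{-1},e)=\beta_x(\gamma^{-1},\gamma^{-1}L)+\beta_x(\gamma^{-1}L,n_\gamma h_1)+\beta_x(n_\gamma h_1,n_\gamma h_1n_1)+\beta_x(n_\gamma h_1n_1,h(x))+\beta_x(h(x),e).
\]
The terms are identified as follows.
\begin{itemize}
\item By equivariance, the first term is $\beta_{\gamma x}(e,L)=\kappa(\gamma x)$.
\item The last term is $-\kappa(x)$.
\item The third term vanishes by \eqref{eq:busemann-A-coordinate}, because $n_1\in N$.
\item For the fourth term, $n_\gamma h_1n_1$ and $h(x)$ both have endpoints $(x,e^-)$. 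So $n_\gamma h_1n_1=h(x)\,q$ with $q\in MA$. Since $n_\gamma h_1n_1h_2=:q_H\in MA$, we get $h(x)\,(q h_2 q^{-1})\,q=q_H$. Uniqueness of the decomposition in $\check N\,MA$ forces $q=q_H$, so by \eqref{eq:busemann-A-coordinate} the fourth term is $-\log\pi_A(q_H)=-\mathcal H(h(x),n_\gamma)$, possibly up to sign conventions for $\pi_A$ on inverses that I would check carefully.
\end{itemize}

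\emph{Step 3 (the endpoint correction is constant in $x$).} This is the step I expect to be the real obstacle: the second term $\beta_x(\gamma^{-1}L(x),n_\gamma h_1(x))$ must not depend on $x$. Both elements have the form $g_i c_i$, with $g_1=\gamma^{-1}g_{\gamma_0}$ and $g_2=n_\gamma$ fixed, $c_i\in\check N$ depending on $x$, and common negative endpoint $n_\gamma^-$. Hence $g_2^{-1}g_1$ fixes $e^-$ and lies in $\check P=MA\check N$. Write $g_2^{-1}g_1c_1=c_2 q'$ with $q'\in MA$. The relation $\check P\,\check N=\check N\,MA$ then shows that $\log\pi_A(q')$ equals the $A$-part of $g_2^{-1}g_1$ in $\check P=MA\check N$, which is independent of $x$. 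Thus the second term is the same constant at $x$ and at $x_\omega$.

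\emph{Step 4 (subtracting the base point).} Perform the same decomposition at $x_\omega$. There $h(x_\omega)=e$, so the $\kappa(x_\omega)$ term is present, and the holonomy term is subtracted as well. Subtracting the two identities, the constant of Step 3 cancels, and the remaining terms assemble into \eqref{eq:finite-holonomy}. The one point still to verify is that $\mathcal H(h(x_\omega),n_\gamma)$ vanishes or cancels consistently with the stated formula. I expect $\mathcal H(e,n)=0$: with $x=x_\omega$ one can take $h_2$ trivial and $n_\gamma h_1n_1\in MA$ with trivial $A$-part. I would confirm this, together with the sign bookkeeping of Step 2, as the final check.
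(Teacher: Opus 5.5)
Your Steps 1--3 are correct, and they are the paper's argument made explicit. The paper uses the same frame $p_z=\gamma^{-1}g_{\gamma_0}h_{\gamma_0}(\gamma z)$ (your $\gamma^{-1}L$) and proves $\log\pi_A(p_z^{-1}h(z))=\ret_m(\gamma z)+\kappa(z)-\kappa(\gamma z)$. That is your chain with the three middle terms combined. The paper then only asserts, from a verbal description of the rectangle, that $\mathcal H(h(x),n_\gamma)$ equals the difference of these $A$-coordinates at $z=x$ and $z=x_\omega$. Your splitting of $\beta_x(\gamma^{-1}L,h(x))$ into a constant (Step 3), a vanishing term, and the fourth term actually computes that difference, so your route is the more careful one.

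However, your sign hedges resolve against the statement.
\begin{itemize}
\item Since $\pi_A$ is a homomorphism on $MA$, the fourth term is exactly $\log\pi_A(q_H^{-1})=-\mathcal H(h(x),n_\gamma)$, with no convention to choose.
\item Your check $\mathcal H(e,n)=0$ is right: $h_1=e$, $n_1=n^{-1}$, $h_2=e$.
\end{itemize}
Subtracting your two chains therefore gives
\[
\ret_m(\gamma x)-\ret_m(\gamma x_\omega)=-\mathcal H(h(x),n_\gamma)-\kappa(x)+\kappa(x_\omega)+\kappa(\gamma x)-\kappa(\gamma x_\omega),
\]
so your closing claim that the terms ``assemble into \eqref{eq:finite-holonomy}'' is false as written.

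The fault is in the statement, not in your computation: the printed $+\mathcal H$ is a sign error. It enters at exactly the paper's unjustified step, since the difference of $A$-coordinates equals $-\mathcal H(h(x),n_\gamma)$. You can check this in $\mathrm{SL}_2(\R)$, identifying $\diag(t,-t)$ with $t$. Take $h(x)=\begin{pmatrix}1&0\\u&1\end{pmatrix}$ and $n_\gamma=\begin{pmatrix}1&s\\0&1\end{pmatrix}$. The rectangle closes at $n_\gamma h_1n_1h_2=\diag\bigl((1-su)^{-1},1-su\bigr)$, so $\mathcal H(h(x),n_\gamma)=-\log|1-su|$. This is not identically zero in $x$ once $n_\gamma\neq e$, which happens whenever $\gamma\neq\omega^m$. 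Since your chain is an exact identity, the $+\mathcal H$ version cannot hold as well.

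The slip is harmless downstream. \cref{lem:two-branch-holonomy} becomes $\Phi_{\zeta,\eta}(x,x_\omega)=\mathcal H(h(x),n_\eta)-\mathcal H(h(x),n_\zeta)$, and later only the nonvanishing of a derivative is used. So you should state and prove the lemma with $-\mathcal H(h(x),n_\gamma)$.
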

	
	\begin{proof}
		For $z\in {\mathbf b}_\omega$, the cocycle identity gives
		\[
		\ret_m(\gamma z)=\beta_{\gamma z}(e,\gamma).
		\]
		Let
		\[
		p_z:=\gamma^{-1}g_{\gamma_0}h_{\gamma_0}(\gamma z).
		\]
		Then
		\[
		p_z^+=z,
		\qquad
		p_z^-=\gamma^{-1}y_{\gamma_0}=n_\gamma^-.
		\]
		The frame $h_\omega(z)=h(z)$ has endpoints $(z,y_\omega)$. Thus $p_z$ and $h(z)$ have the same positive endpoint $z$. By \eqref{eq:busemann-A-coordinate} and $G$-equivariance of the Busemann function,
		\begin{align*}
			\log\pi_A\bigl(p_z^{-1}h(z)\bigr)
			=\beta_z(p_z,h(z))
			=\beta_{\gamma z}(g_{\gamma_0}h_{\gamma_0}(\gamma z),\gamma h(z)).
		\end{align*}
		By additivity of the Busemann function,
		\[
		\beta_{\gamma z}(g_{\gamma_0}h_{\gamma_0}(\gamma z),\gamma h(z))
		=\beta_{\gamma z}(e,\gamma)
		+\beta_z(e,h(z))
		-\beta_{\gamma z}(e,g_{\gamma_0}h_{\gamma_0}(\gamma z)).
		\]
		Putting the two calculations together and using definitions, we arrive at the identity
		\[
		\log\pi_A\bigl(p_z^{-1}h(z)\bigr) = \ret_m(\gamma z)+\kappa(z)-\kappa(\gamma z).
		\]
		The rectangular holonomy in Definition \ref{def:rectangle-holonomy} computes the change of this $A$-coordinate as $z$ moves from $x_\omega$ to $x$ while the negative endpoint $n_\gamma^-$ is fixed. Indeed, the rectangle starts from the frame with endpoints
		\((x_\omega,y_\omega)\), changes the negative endpoint to \(n_\gamma^-\),
		then changes the positive endpoint from \(x_\omega\) to \(x\), and finally
		returns the negative and positive endpoints to \(y_\omega\) and \(x_\omega\).
		Thus its \(A\)-component is precisely the difference between the two
		\(A\)-coordinates \(\log\pi_A(p_z^{-1}h(z))\) at \(z=x\) and \(z=x_\omega\). Therefore
		\[
		{\mathcal H}(h(x),n_\gamma)
		=\log\pi_A(p_x^{-1}h(x))-
		\log\pi_A(p_{x_\omega}^{-1}h(x_\omega)).
		\]
		Substituting the preceding identity for $z=x$ and $z=x_\omega$ gives
		\begin{align*}
			{\mathcal H}(h(x),n_\gamma)
			&=\ret_m(\gamma x)-\ret_m(\gamma x_\omega)
			+\kappa(x)-\kappa(x_\omega)
			-\kappa(\gamma x)+\kappa(\gamma x_\omega).
		\end{align*}
		Rearranging yields \eqref{eq:finite-holonomy}.
	\end{proof}
	
	\begin{lemma}[Two-branch holonomy identity]\label{lem:two-branch-holonomy}
		Let $\xi$ be an infinite reduced word beginning with $\omega^{-1}$. Let $n_\xi\in N_\omega$ be determined by
		\[
		n_\xi^-=
		\xi\in\Lambda_\Gamma\cap {\mathbf b}_{\omega^{-1}}.
		\]
		Then, for $x\in {\mathbf b}_\omega$,
		\begin{equation}\label{eq:Delta-H}
			\Delta_\xi(x,x_\omega)={\mathcal H}(h(x),n_\xi)-\kappa(x)+\kappa(x_\omega).
		\end{equation}
		Consequently, for any two such words $\zeta$ and $\eta$,
		\begin{equation}\label{eq:Phi-H}
			\Phi_{\zeta,\eta}(x,x_\omega)
			={\mathcal H}(h(x),n_\zeta)-{\mathcal H}(h(x),n_\eta).
		\end{equation}
	\end{lemma}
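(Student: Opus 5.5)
The plan is to deduce \eqref{eq:Delta-H} from the finite holonomy comparison, \cref{lem:finite-holonomy}, by applying it to the prefixes of $\xi^{-1}$ and letting the length go to infinity. Fix $m\ge1$ and set $\gamma^{(m)}:=\xi_{[m]}^{-1}=\xi_{m-1}^{-1}\cdots\xi_0^{-1}$. This is a reduced word of length $m$. Its last letter is $\xi_0^{-1}=\omega$, and $\xi_0^{-1}$ is not the inverse of $\omega$ (since $\omega\neq\omega^{-1}$ for a Schottky letter). Hence $\gamma^{(m)}\omega$ is reduced, and \cref{lem:finite-holonomy} applies with $\gamma=\gamma^{(m)}$ and first letter $\gamma_0=\xi_{m-1}^{-1}$. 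For $x\in\mathbf b_\omega$ it gives
\[
\ret_m(\xi_{[m]}^{-1}x)-\ret_m(\xi_{[m]}^{-1}x_\omega)={\mathcal H}(h(x),n_{\gamma^{(m)}})-\kappa(x)+\kappa(x_\omega)+\kappa(\xi_{[m]}^{-1}x)-\kappa(\xi_{[m]}^{-1}x_\omega).
\]
Since $\mathbf b_\omega\subset\mathbf b_{\xi_0}^-$, the left side tends to $\Delta_\xi(x,x_\omega)$ as $m\to\infty$, by the definition of the discrepancy.

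Next, the last two $\kappa$ terms vanish in the limit. The points $\xi_{[m]}^{-1}x$ and $\xi_{[m]}^{-1}x_\omega$ lie in the same $m$th-level domain $\Sball_{\gamma^{(m)}}$, whose diameter is $O(\tau^{m})$ by Schottky contraction. Moreover $\kappa$ is smooth, hence Lipschitz, on each first-level domain. So these terms are $O(\tau^m)$.

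It remains to show ${\mathcal H}(h(x),n_{\gamma^{(m)}})\to{\mathcal H}(h(x),n_\xi)$. For this I would check two things.
\begin{enumerate}
\item The endpoints converge: $(\gamma^{(m)})^{-1}y_{\xi_{m-1}^{-1}}=\xi_{[m]}\,y_{\xi_{m-1}^{-1}}\to\xi$. Indeed $y_{\xi_{m-1}^{-1}}=\xi_{m-1}\xi_{m-1}\cdots$ lies in $\Sball_{\xi_{m-1}}$, so $\xi_{[m]}y_{\xi_{m-1}^{-1}}\in\Sball_{\xi_{[m]}}$, which shrinks to $\xi^+$.
\item $\mathcal H$ is continuous in $n$. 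By construction $h_1,n_1,h_2$ are obtained from $(h,n)$ via the visual charts, which are diffeomorphisms on the open big cell. So $(h,n)\mapsto{\mathcal H}(h,n)$ is smooth on $\check N_\omega\times N_\omega$. Together with continuity of $(\vis^-_\omega)^{-1}$, this gives $n_{\gamma^{(m)}}\to n_\xi$ in $N_\omega$.
\end{enumerate}
Passing to the limit yields \eqref{eq:Delta-H}.

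For \eqref{eq:Phi-H}, write \eqref{eq:Delta-H} for $\zeta$ and for $\eta$ and subtract. The correction $-\kappa(x)+\kappa(x_\omega)$ does not depend on the branch, so it cancels, and what remains is the definition of $\Phi_{\zeta,\eta}$.

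The main obstacle I expect is the continuity and well-definedness of $\mathcal H$ at the limiting parameter $n_\xi$. The point is that the unique $h_1,n_1,h_2$ must exist, i.e.\ the relevant pairs must stay in general position, uniformly along the sequence. Here I would use the Schottky separation $\mathbf b_\omega\times\mathbf b_\omega^-\subset\F^{(2)}$. All the negative endpoints $n_{\gamma^{(m)}}^-$ and $\xi$ lie in $\overline{\mathbf b_{\omega^{-1}}}$, a compact subset of $\mathbf b^-_\omega$ (after slightly shrinking domains if needed). Hence the implicit constructions stay within a compact part of the big cell, and smooth dependence gives the required convergence.
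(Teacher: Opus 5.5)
Your proposal is correct and follows the paper's proof essentially step for step. You apply \cref{lem:finite-holonomy} with $\gamma=\xi_{[m]}^{-1}$ and let $m\to\infty$, using $n_{\xi_{[m]}^{-1}}\to n_\xi$ in $N_\omega$ and the $O(\tau^m)$ decay of the $\kappa$-differences; subtracting the identities for $\zeta$ and $\eta$ then cancels the branch-independent correction. Your explicit checks that $\xi_{[m]}^{-1}\omega$ is reduced and that $\mathcal H$ is smooth on $\check N_\omega\times N_\omega$ supply details the paper leaves implicit. The compactness and shrinking remark is not needed, since continuity of $\mathcal H$ at the single point $n_\xi\in N_\omega$ already suffices.
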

	
	\begin{proof}
		Write
		\[
		\gamma_m:=\xi_{[m]}^{-1}.
		\]
		Then $\gamma_mx$ and $\gamma_mx_\omega$ both lie in the domain of $\ret_m$. The first letter of $\gamma_m$ is $\xi_{m-1}^{-1}$. Define $n_{\gamma_m}\in N_\omega$ by
		\[
		n_{\gamma_m}^-:=\gamma_m^{-1}y_{\xi_{m-1}^{-1}}
		=\xi_{[m]}y_{\xi_{m-1}^{-1}}.
		\]
		The infinite word representing $\xi_{[m]}y_{\xi_{m-1}^{-1}}$ has the prefix $\xi_0\xi_1\cdots\xi_{m-1}$, and hence these points converge to $\xi$. Therefore
		\[
		n_{\gamma_m}\to n_\xi
		\]
		in the visual chart of $N_\omega$.
		
		By Lemma \ref{lem:finite-holonomy}, for all sufficiently large $m$,
		\begin{multline*}
			\ret_m(\gamma_mx)-\ret_m(\gamma_mx_\omega) 
			={\mathcal H}(h(x),n_{\gamma_m})-\kappa(x)+\kappa(x_\omega)   +\kappa(\gamma_mx)-\kappa(\gamma_mx_\omega).
		\end{multline*}
		The two points $\gamma_mx$ and $\gamma_mx_\omega$ lie in the same Schottky domain and converge exponentially to each other. Since $\kappa$ has bounded derivatives on each first-level domain,
		\[
		\kappa(\gamma_mx)-\kappa(\gamma_mx_\omega)\to0.
		\]
		Passing to the limit gives \eqref{eq:Delta-H}. Taking the difference between the corresponding identities for $\zeta$ and $\eta$ cancels the endpoint correction $-\kappa(x)+\kappa(x_\omega)$ and gives \eqref{eq:Phi-H}.
	\end{proof}
	
	For fixed $n\in N_\omega$, write
	\[
	{\mathcal H}_n(h):={\mathcal H}(h,n).
	\]
	Define a smooth map
	\[
	\Psi_n:\check N_\omega\to\check N
	\]
	by requiring
	\[
	(n\Psi_n(h))^+=h^+.
	\]
	Thus $\Psi_e(h)=h$, and
	\[
	(D\Psi_n)_e:\check{\mathfrak{n}}\to\check{\mathfrak{n}}
	\]
	is an isomorphism for all $n \in N_\omega$. 
	
	\begin{lemma}[Derivative of the rectangular holonomy]\label{lem:derivative-holonomy}
		For $n\in N_\omega$,
		\[
		( D {\mathcal H}_n)_e=\pi_{\aL}\circ \Ad_n|_{\check{\mathfrak{n}}}\circ (D\Psi_n)_e.
		\]
		
	\end{lemma}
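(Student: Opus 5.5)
We compute $\mathcal H_n(h)$ explicitly near $h=e$ by solving for $h_1,n_1,h_2$ in the rectangle, and then differentiate at $h=e$.

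\emph{Step 1: the rectangle at $h=e$.} By construction $h_1=\Psi_n(h)$, since $(nh_1)^+=h^+$ is exactly the defining property of $\Psi_n$. When $h=e$ we have $h_1=\Psi_n(e)$. Because $n^+=e^+$, this forces $\Psi_n(e)=e$. Next, $n_1\in N$ must satisfy $(nn_1)^-=e^-$, so $n_1=n^{-1}$. Then $h_2=e$, and the product $nh_1n_1h_2=e$ gives $\mathcal H_n(e)=0$.

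\emph{Step 2: rewrite the product.} For general $h$ write $h_1=\Psi_n(h)$ and
\[
nh_1n_1h_2=(nh_1n^{-1})\,(n n_1)\,h_2 .
\]
Set $g(h):=nh_1n^{-1}=\exp(\Ad_n X)$, where $h_1=\exp X$. This is a curve through $e$ with tangent $\Ad_n\circ(D\Psi_n)_e$ applied to the tangent of $h$. The remaining unknowns are $u:=nn_1\in N$ and $h_2\in\check N$. They are determined by requiring $g\,u\,h_2\in MA$, and they tend to $e$ as $h\to e$. Thus $gu h_2=p\in MA$, that is, $g=p\,h_2^{-1}u^{-1}\in MA\,\check N\,N$.

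\emph{Step 3: differentiate.} Near $e$, the multiplication map $MA\times\check N\times N\to G$ is a local diffeomorphism. Its differential at the identity is the direct-sum decomposition
\[
\g=(\aL\oplus\mL)\oplus\cnL\oplus\nL .
\]
Hence the first-order $MA$-component of $g(h)$ is the $(\aL\oplus\mL)$-component of its tangent vector. Applying $\pi_A$ and then $\log$ keeps only the $\aL$-part, so
\[
(D\mathcal H_n)_e=\pi_{\aL}\circ\Ad_n|_{\cnL}\circ(D\Psi_n)_e .
\]
Here the tangent of $h$ at $e$ is identified with $\cnL$. One must check that $\pi_{\aL}$ in the paper is the projection along $\cnL\oplus\mL\oplus\nL$, which matches this decomposition; that is the paper's convention.

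\emph{Main obstacle.} The step needing care is the uniqueness and smoothness of $(u,h_2,p)$ as functions of $h$, so that one may legitimately differentiate $\mathcal H_n$ through the local product chart. The endpoint conditions pin down $u$ and $h_2$ uniquely: the transversality conditions in the visual charts give this. Their agreement with the local inverse of the product map near $e$ then follows from uniqueness. One also has to note that the $M$-ambiguity does not affect the $A$-component, since $\pi_A$ is taken on $MA\cong M\times A$.
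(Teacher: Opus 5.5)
Your proposal is correct and is essentially the paper's proof. Both rewrite the rectangle as $n\Psi_n(h)n^{-1}=q\,h_2^{-1}(nn_1)^{-1}\in MA\,\check N\,N$ (the paper's $d(t)$ is your $u^{-1}$) and read off the $\mathfrak a$-part of the first-order term via $\mathfrak g=\check{\mathfrak n}\oplus\mathfrak a\oplus\mathfrak m\oplus\mathfrak n$; phrasing this through the differential of the product map $MA\times\check N\times N\to G$, plus your uniqueness remark, just makes explicit the smoothness of $n_1(h)$ and $h_2(h)$ that the paper takes for granted.
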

	
	\begin{proof}
		Fix $n\in N_\omega$. For $h\in\check N_\omega$, the first $\check N$-move in the rectangle is
		\[
		h_1=\Psi_n(h).
		\]
		By Definition \ref{def:rectangle-holonomy}, there are unique smooth functions
		\[
		n_1(h)\in N,
		\qquad
		h_2(h)\in\check N
		\]
		such that
		\[
		q(h):=n\Psi_n(h)n_1(h)h_2(h)\in MA.
		\]
		Then
		\[
		{\mathcal H}_n(h)=\log\pi_Aq(h).
		\]
		At $h=e$, we have $\Psi_n(e)=e$, $n_1(e)=n^{-1}$, $h_2(e)=e$, and hence $q(e)=e$.
		
		Let $W\in\check{\mathfrak{n}}$ and set $h(t)=\exp(tW)$. Put
		\[
		V:=(D\Psi_n)_e(W)\in\check{\mathfrak{n}}.
		\]
		Then
		\[
		\Psi_n(h(t))=\exp(tV+O(t^2)).
		\]
		For brevity, write
		\[
		n_1(t):=n_1(h(t)),
		\qquad
		h_2(t):=h_2(h(t)),
		\qquad
		q(t):=q(h(t)).
		\]
		Thus
		\[
		q(t)=n\Psi_n(h(t))n_1(t)h_2(t)\in MA,
		\qquad q(0)=e.
		\]
		Write the first-order expansions
		\[
		q(t)=\exp(tQ+O(t^2)),\quad Q\in\mathfrak{m}\oplus\aL,
		\]
		\[
		h_2(t)=\exp(tS+O(t^2)),\quad S\in\check{\mathfrak{n}}.
		\]
		Since $n_1(0)=n^{-1}$, define
		\[
		d(t):=n_1(t)^{-1}n^{-1}\in N.
		\]
		Then
		\[
		d(t)=\exp(tD+O(t^2)),\quad D\in\mathfrak{n}.
		\]
		From
		\[
		q(t)=n\Psi_n(h(t))n_1(t)h_2(t),
		\]
		we multiply on the right by $h_2(t)^{-1}n_1(t)^{-1}n^{-1}$ and get
		\[
		n\Psi_n(h(t))n^{-1}=q(t)h_2(t)^{-1}d(t).
		\]
		The left-hand side is
		\[
		n\exp(tV+O(t^2))n^{-1}=\exp(t\Ad_nV+O(t^2)).
		\]
		The right-hand side is
		\[
		\exp(tQ+O(t^2))\exp(-tS+O(t^2))\exp(tD+O(t^2)).
		\]
		Comparing first-order terms gives
		\[
		\Ad_nV=Q-S+D.
		\]
		Projecting to $\aL$, the terms $S\in\check{\mathfrak{n}}$ and $D\in\mathfrak{n}$ vanish. Hence
		\[
		\pi_{\aL}(\Ad_nV)=\pi_{\aL}(Q).
		\]
		Since $q(t)\in MA$ and $A$ commutes with $M$, the $\aL$-component of $Q$ is precisely
		\[
		\left.\frac{d}{dt}\right|_{t=0}\log\pi_Aq(t).
		\]
		Therefore
		\[
		(D {\mathcal H}_n)_e(W)=\pi_{\aL}\bigl(\Ad_n (D\Psi_n)_e(W)\bigr).
		\]
		This proves the claim.
	\end{proof}

	See \cite[Chapter VI, \S 3, Lemma 3.1]{Hel01} for the following lemma.
	\begin{lemma}[Root space nondegeneracy] \label{lem:root-nondeg}
		Let $\alpha\in\Pi$. There exist $E_\alpha\in\g_\alpha$ and $E_{-\alpha}\in\g_{-\alpha}$ such that
		\[  [E_\alpha,E_{-\alpha}]\in \R_{>0}H_\alpha.
		\]
	\end{lemma}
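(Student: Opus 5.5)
The plan is to take $E_{-\alpha}$ to be, up to sign, the image of $E_\alpha$ under a Cartan involution, and to compute $[E_\alpha,E_{-\alpha}]$ against $\aL$ using invariance of the Killing form. The lemma is really a statement about any root, and simplicity of $\alpha$ plays no role.

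\textbf{Setup.} Let $\theta$ be the Cartan involution of $\g$ compatible with the Iwasawa decomposition $G=KAN$. Write $\g=\mathfrak k\oplus\mathfrak p$ for its $\pm1$ eigenspaces, so that $\mathfrak k=\operatorname{Lie}(K)$ and $\aL\subset\mathfrak p$. We use three standard facts.
\begin{enumerate}
\item The form $B_\theta(X,Y):=-B(X,\theta Y)$ is positive definite on $\g$.
\item $\theta$ acts by $-1$ on $\aL$, hence $\theta\g_\alpha=\g_{-\alpha}$.
\item The zero weight space is $\g_0=\mL\oplus\aL$, with $\mL\subset\mathfrak k$ and $\aL\subset\mathfrak p$.
\end{enumerate}

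\textbf{Step 1: the bracket lies in $\aL$.} Pick any nonzero $E_\alpha\in\g_\alpha$ and set $E_{-\alpha}:=-\theta E_\alpha\in\g_{-\alpha}$. The bracket $X:=[E_\alpha,\theta E_\alpha]$ lies in $\g_0$. It also satisfies
\[
\theta X=[\theta E_\alpha,E_\alpha]=-X,
\]
so $X\in\mathfrak p\cap\g_0=\aL$.

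\textbf{Step 2: identify the multiple of $H_\alpha$.} For $H\in\aL$, invariance of $B$ gives
\[
B(H,X)=B([H,E_\alpha],\theta E_\alpha)=\alpha(H)\,B(E_\alpha,\theta E_\alpha).
\]
Since $B|_{\aL}$ is nondegenerate (indeed positive definite, as $\aL\subset\mathfrak p$), the defining property of $H_\alpha$ then forces
\[
X=B(E_\alpha,\theta E_\alpha)\,H_\alpha=-B_\theta(E_\alpha,E_\alpha)\,H_\alpha .
\]

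\textbf{Step 3: the sign.} Combining the previous steps,
\[
[E_\alpha,E_{-\alpha}]=-X=B_\theta(E_\alpha,E_\alpha)\,H_\alpha .
\]
The coefficient is strictly positive because $B_\theta$ is positive definite and $E_\alpha\neq0$. This proves the lemma.

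The only point needing care is that the bracket lies in $\aL$ and not merely in $\mL\oplus\aL$; this is exactly the $\theta$-parity argument in Step 1. Everything else is a direct consequence of the invariance of $B$.
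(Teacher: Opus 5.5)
Your proof is correct. Setting $E_{-\alpha}=-\theta E_\alpha$, using $\theta$-parity to place $[E_\alpha,\theta E_\alpha]$ in $\mathfrak p\cap(\mL\oplus\aL)=\aL$, and then invariance of $B$ to get $[E_\alpha,\theta E_\alpha]=B(E_\alpha,\theta E_\alpha)H_\alpha$ with $B(E_\alpha,\theta E_\alpha)=-B_\theta(E_\alpha,E_\alpha)<0$ is the standard Cartan-involution argument; the paper gives no proof of its own and simply cites Helgason for this fact, so your proposal supplies the standard argument behind that citation.
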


	\begin{lemma}\label{lem:nonzero-derivative}
		Let $\psi\in\aL^*-\{0\}$. Then, there exist an infinite reduced word $\zeta$ beginning with $\omega^{-1}$ and a tangent vector $v\in T_{x_\omega}\F$ such that
		\[
		\psi\left(D_x|_{x=x_\omega}\bigl({\mathcal H}(h(x),n_\zeta)\bigr)(v)\right)\ne0.
		\]
	\end{lemma}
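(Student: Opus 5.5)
We argue by contradiction, using \cref{lem:derivative-holonomy}. Since $h=h_\omega$ is a diffeomorphism from ${\mathbf b}_\omega$ onto $\check N_\omega$ with $h(x_\omega)=e$, the chain rule gives
\[
D_x|_{x=x_\omega}\bigl({\mathcal H}(h(x),n_\zeta)\bigr)=(D{\mathcal H}_{n_\zeta})_e\circ (Dh)_{x_\omega}.
\]
By \cref{lem:derivative-holonomy}, its image equals $\pi_{\aL}\bigl(\Ad_{n_\zeta}(\check{\mathfrak n})\bigr)$, because $(D\Psi_{n_\zeta})_e$ is an isomorphism of $\check{\mathfrak n}$. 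So it suffices to find $\zeta$ beginning with $\omega^{-1}$ such that $\psi\circ\pi_{\aL}\circ\Ad_{n_\zeta}|_{\check{\mathfrak n}}\neq0$.

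Suppose instead that $\psi(\pi_{\aL}(\Ad_n W))=0$ for all $W\in\check{\mathfrak n}$ and all $n$ in the set
\[
S:=\{n\in N_\omega : n^-\in\Lambda_\Gamma\cap{\mathbf b}_{\omega^{-1}}\}.
\]
The maps $\Lambda_\Gamma\cap{\mathbf b}_{\omega^{-1}}\ni\xi\mapsto n_\xi$ cover every limit point in ${\mathbf b}_{\omega^{-1}}$, since those points are exactly the infinite words beginning with $\omega^{-1}$. Define the smooth map
\[
F:N_\omega\to\check{\mathfrak n}^*,\qquad F(n)(W):=\psi\bigl(\pi_{\aL}(\Ad_nW)\bigr).
\]
Then $S\subset F^{-1}(0)$. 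The plan is to show that $F$ has nonzero differential at the relevant points, so that near some point of $S$ the zero set of a single nonvanishing component of $F$ is a hypersurface. Transported by $\vis^-_\omega$, this puts the nonempty open piece $\Lambda_\Gamma\cap{\mathbf b}_{\omega^{-1}}$ locally inside a lower-dimensional submanifold of $\F$, contradicting \cref{elo}. Since ${\mathbf b}_{\omega^{-1}}\subset{\mathbf b}_\omega^-$, the chart is available there.

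The key computation is the Lie bracket one. Fix $n_0\in S$ and write $n=n_0\exp(sX)$ with $X\in\mathfrak n$. Then
\[
\frac{d}{ds}\Big|_{s=0}F(n)(W)=\psi\bigl(\pi_{\aL}(\Ad_{n_0}[X,W])\bigr).
\]
One option is to pick $n_0$ near $e$, but $e$ itself need not correspond to a limit point. Instead, we exploit that the identity holds for all $W$. Replace $W$ by $W':=\Ad_{n_0}^{-1}$ applied to elements of $\Ad_{n_0}\check{\mathfrak n}$. The key structural fact is that $\pi_{\aL}\circ\Ad_{n_0}$ restricted to $\check{\mathfrak n}$, modulo terms in $\mathfrak n$, only sees the brackets $[\mathfrak n,\check{\mathfrak n}]$.

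Choose a simple root $\alpha\in\Pi$ with $\psi(H_\alpha)\neq0$; one exists because $\{H_\alpha\}_{\alpha\in\Pi}$ is a basis of $\aL$. Take $E_{\pm\alpha}$ as in \cref{lem:root-nondeg}, with $W=E_{-\alpha}$ and $X=E_\alpha$. We want to show that the differential of $n\mapsto F(n)(\Ad_{n}^{-1}\text{-corrected }W)$ in direction $X$ equals $\psi([E_\alpha,E_{-\alpha}])$ up to a nonzero multiple. A cleaner route is to consider the single function
\[
f(n):=\psi\bigl(\pi_{\aL}(\Ad_nE_{-\alpha})\bigr).
\]
Its derivative in the direction of the right-invariant vector field $X$ at $n$ is $\psi\bigl(\pi_{\aL}[X,\Ad_nE_{-\alpha}]\bigr)$. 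Expanding $\Ad_nE_{-\alpha}=E_{-\alpha}+(\text{terms in }\mathfrak m\oplus\aL\oplus\mathfrak n)$, the bracket with $X=E_\alpha\in\mathfrak g_\alpha$ has $\aL$-component $[E_\alpha,E_{-\alpha}]$. Indeed, brackets of $\mathfrak g_\alpha$ with $\mathfrak m\oplus\aL\oplus\mathfrak n$ land in $\mathfrak n$. This uses that $\alpha$ is simple, so that $\Ad_nE_{-\alpha}$ has no other $\check{\mathfrak n}$-components: $\Ad_n$ raises root levels, and a negative root $-\alpha+\sum_i\beta_i$ with the $\beta_i$ positive must be $\geq0$ unless the sum is empty. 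Hence $Df_n(X)=c\,\psi(H_\alpha)\neq0$ for every $n\in N$, with $c>0$. So $f^{-1}(0)$ is a smooth hypersurface of $N_\omega$ containing $S$, which contradicts \cref{elo} after applying $\vis^-_\omega$.

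The main obstacle I expect is verifying the root-level bookkeeping in the last step, namely that $\pi_{\aL}[E_\alpha,\Ad_nE_{-\alpha}]=[E_\alpha,E_{-\alpha}]_{\aL}$ exactly. A second issue is checking that the right-invariant derivative is legitimate, which holds since $f$ is smooth on all of $N$.
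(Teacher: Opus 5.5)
Your proof is correct and is essentially the paper's argument. The paper uses the same function $L(n)=\psi\bigl(\pi_{\aL}(\Ad_n E_{-\alpha})\bigr)$, the bracket $\psi([E_\alpha,E_{-\alpha}])\neq0$ from \cref{lem:root-nondeg}, \cref{lem:derivative-holonomy} and \cref{elo}, but differentiates only at $n=e$ and works in a small neighborhood of $y_\omega$; your global computation ($\Ad_nE_{-\alpha}\in E_{-\alpha}+\mathfrak m\oplus\aL\oplus\mathfrak n$, so $f$ is a submersion on all of $N$) is valid but not needed, and your aside that ``$e$ itself need not correspond to a limit point'' is false, since $\vis^-_\omega(e)=e^-=y_\omega=\omega^{-1}\omega^{-1}\cdots\in\Lambda_\Gamma\cap{\mathbf b}_{\omega^{-1}}$, which is exactly why the local computation at $e$ suffices.
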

	
	\begin{proof}
		Choose $\alpha\in\Pi$ such that $\psi(H_\alpha)\ne0$; we may do so since $\{H_\alpha\}_{\alpha \in \Pi}$ forms a basis of $\LieA$. By Lemma \ref{lem:root-nondeg}, we may choose $0\ne Y = E_\alpha \in\g_\alpha$ and $0 \ne Z = E_{-\alpha}\in\g_{-\alpha}$; then
		\[
		\psi([Y,Z])\ne0.
		\]
		Define
		\[
		L:N\to\R,
		\qquad
		L(n):=\psi\bigl(\pi_{\aL}(\Ad_n Z)\bigr).
		\]
		Then, $L(e) = 0$ and
		\[
		D L_e(Y)=\psi([Y,Z])\ne0.
		\]
		Hence, in a sufficiently small neighborhood of $e$ in $N$, the zero set \(L^{-1}(0)\) is contained in a proper smooth hypersurface through $e$.
		Its image under $\vis_\omega^-$ is then locally contained in a smooth hypersurface through \(y_\omega\). By
		\cref{elo}, the set \(\Lambda_\Gamma\) is not
		locally contained in this hypersurface. Hence, we may choose
		\(n\in N_\omega\) such that
		\[
		n^-\in\Lambda_\Gamma\cap {\mathbf b}_{\omega^{-1}},
		\qquad
		L(n)\ne0.
		\]
		Let $\zeta$ be the infinite reduced word representing $n^-$. Then $\zeta$ begins with $\omega^{-1}$ and $n=n_\zeta$. Since $(D\Psi_n)_e:\check{\mathfrak{n}}\to\check{\mathfrak{n}}$ is an isomorphism for $n \in N_\omega$, choose $W\in\check{\mathfrak{n}}$ such that
		\[
		(D\Psi_n)_e(W)=Z.
		\]
		By Lemma \ref{lem:derivative-holonomy},
		\[
		(D {\mathcal H}_n)_e(W)=\pi_{\aL}(\Ad_nZ).
		\]
		Applying $\psi$ gives
		\[
		\psi((D{\mathcal H}_n)_e(W))=\psi\bigl(\pi_{\aL}(\Ad_nZ)\bigr)=L(n)\ne0.
		\]
		Finally, recalling $\vis^+_\omega(e) = x_\omega$, let
		\[
		v:=(D\vis^+_\omega)_e(W)\in T_{x_\omega}\F.
		\]
		Since $h=(\vis^+_\omega)^{-1}$, the desired inequality follows.
	\end{proof}
	
	\begin{proof}[Proof of Proposition \ref{prop:main-non-lattice}]
		Let $\psi\in\aL^*-\{0\}$. By Lemma \ref{lem:nonzero-derivative}, choose an infinite reduced word $\zeta$ beginning with $\omega^{-1}$ and a tangent vector $v\in T_{x_\omega}\F$ such that
		\[
		\psi\left(D_x|_{x=x_\omega}\bigl({\mathcal H}(h(x),n_\zeta)\bigr)(v)\right)\ne0.
		\]
		Let
		\[
		\eta:=\omega^{-1}\omega^{-1}\omega^{-1}\cdots.
		\]
		Then $\eta$ begins with $\omega^{-1}$ and $n_\eta=e$. Moreover,
		\[
		{\mathcal H}(h,e)=0 \quad\text{for all $h\in\check N_\omega$.}
		\]
		By Lemma \ref{lem:two-branch-holonomy},
		\[
		\Phi^\psi_{\zeta,\eta}(x,x_\omega)=\psi\bigl({\mathcal H}(h(x),n_\zeta)\bigr)
		\]
		for $x$ sufficiently close to $x_\omega$. Differentiating at $x=x_\omega$ in the direction $v$ gives
		\[
		D_x\Phi^\psi_{\zeta,\eta}(x_\omega,x_\omega)(v)
		=\psi\left(D_x|_{x=x_\omega}\bigl({\mathcal H}(h(x),n_\zeta)\bigr)(v)\right)
		\ne0.
		\]
		Since $x_\omega\in\Lambda_\Gamma$, this is exactly the nonconstant discrepancy condition for ${{\mathsf K}^\psi}=\psi\circ \ret$.
	\end{proof}
	
	\section{Conclusion} 
	
	\begin{proof}[Proof of Theorem \ref{m2}]
		By Lemma \ref{elo2}, choose a Zariski dense Schottky  subgroup
		$\Gamma_0<\Gamma$.
		Applying Proposition \ref{prop:main-non-lattice} and Proposition \ref{prop:non-lattice-criterion} to $\Gamma_0$, we obtain that the subgroup of $\R$ generated by
		$\{\psi(\lambda(\gamma)):\gamma\in\Gamma_0\}$
		is dense. Since $\lambda(\Gamma_0)\subset\lambda(\Gamma)$, the subgroup generated by $\psi(\lambda(\Gamma))$ contains a dense subgroup. It is therefore dense in $\R$.
	\end{proof}

	\begin{proof}[Proof of \Cref{m1} assuming \Cref{m2}]
		Let $\fa_0:=\overline{\langle \lambda(\Gamma)\rangle}$
		be the closure of the subgroup of \(\fa\) generated by \(\lambda(\Gamma)\).
		We first show that \(\fa_0=\fa\).
		
		Recall that a closed subgroup of a finite-dimensional real vector space is of
		the form $\fa_0=V\oplus \Lambda$
		where \(V<\fa\) is a vector subspace and \(\Lambda\) is a lattice in a
		complementary vector subspace. Equivalently, after choosing suitable vectors
		\(v_i,w_j\in\fa\), we may write
		\[
		\fa_0
		=
		\bigoplus_i \mathbb R v_i
		\oplus
		\bigoplus_j \mathbb Z w_j .
		\]
		
		Suppose first that the lattice part is nontrivial. Choose one of the lattice
		generators, say \(w_{j_0}\), and let $\psi:\fa\to\mathbb R$
		be a linear form which vanishes on \(V\) and on all \(w_j\) with
		\(j\neq j_0\), and satisfies $\psi(w_{j_0})=1$.
		Then $\psi(\fa_0)\subset \mathbb Z$.
		Since \(\lambda(\Gamma)\subset\fa_0\), it follows that $\psi(\lambda(\Gamma))\subset \mathbb Z$,
		contradicting \Cref{m2}. Therefore the lattice part is trivial, and
		\(\fa_0\) is a vector subspace of \(\fa\).
		If \(\fa_0\neq\fa\), choose a nonzero linear form $ \psi\in\fa^*$
		such that \(\psi|_{\fa_0}=0\). Then
		$    \psi(\lambda(\Gamma))=\{0\}$,
		again contradicting \Cref{m2}. Hence
		$    \fa_0=\fa$
		and thus \(\lambda(\Gamma)\) generates a dense subgroup of \(\fa\).
		
		Now suppose, for
		contradiction, that $\operatorname{int}(\L_\Gamma)=\emptyset$.
		Since \(\L_\Gamma\) is a convex cone by Benoist \cite{Benoist97}, it is contained in a proper linear subspace of \(\fa\). Hence there
		exists a nonzero linear form $\psi\in\fa^*$
		such that $    \psi|_{\L_\Gamma}=0$. Since $\lambda(\Gamma)\subset \L_\Gamma$,
		we get
		\[
		\psi(\lambda(\Gamma))=\{0\},
		\]
		contradicting \Cref{m2}. Therefore $\operatorname{int}(\L_\Gamma)\neq\emptyset $.
	\end{proof}

	\begin{remark}
		Since the rectangular holonomy is a priori $MA$-valued, the same techniques can prove the non-arithmeticity of the generalized Jordan spectrum in $MA$ (see \cite{LO24}).
	\end{remark}
	\bibliographystyle{alpha}
	\bibliography{NA}
	
\end{document}